\newcommand{\bburl}[1]{\textcolor{blue}{\url{#1}}}
\def\shrinkage{-2.4mu}
\def\vecsign#1{\rule[1.388\LMex]{\dimexpr#1-2.5pt}{.36\LMpt}%
  \kern-6.0\LMpt\mathchar"017E}
\def\dvecsign#1{\rule{0pt}{7\LMpt}\smash{\stackon[-1.989\LMpt]{%
  \SavedStyle\mkern-\shrinkage\vecsign{#1}}%
  {\rotatebox{180}{$\SavedStyle\mkern-\shrinkage\vecsign{#1}$}}}}
\def\dvec#1{\ThisStyle{\setbox0=\hbox{$\SavedStyle#1$}%
  \def\useanchorwidth{T}\stackon[-4.2\LMpt]{\SavedStyle#1}{\,\dvecsign{\wd0}}}}
\newcommand{\burl}[1]{\textcolor{blue}{\url{#1}}}
\numberwithin{equation}{section}
\newtheorem{thm}{Theorem}[section]
\newtheorem{conj}[thm]{Conjecture}
\newtheorem{cor}[thm]{Corollary}
\newtheorem{prop}[thm]{Proposition}
\theoremstyle{plain}
\newtheorem{definition}[thm]{Definition}
\newtheorem{lemma}[thm]{Lemma}
\newtheorem{theorem}[thm]{Theorem}
\newcommand\be{\begin{equation}}
\newcommand\ee{\end{equation}}
\newcommand\bee{\begin{equation*}}
\newcommand\eee{\end{equation*}}
\newcommand\bea{\begin{eqnarray}}
\newcommand\eea{\end{eqnarray}}
\newcommand\beae{\begin{eqnarray*}}
\newcommand\eeae{\end{eqnarray*}}
\newcommand\bi{\begin{itemize}}
\newcommand\ei{\end{itemize}}
\newcommand\ben{\begin{enumerate}}
\newcommand\een{\end{enumerate}}
\newcommand\bc{\begin{center}}
\newcommand\ec{\end{center}}
\newcommand\ba{\begin{array}}
\newcommand\ea{\end{array}}
\newcommand\frakfamily{\usefont{U}{yfrak}{m}{n}}
\DeclareTextFontCommand{\textfrak}{\frakfamily}
\newcommand{\hr}[1]{\href{#1}{\url{#1}}}
\title{Optimal Point Sets Determining Few Distinct Angles}
\author{Henry L. Fleischmann}
\email{\textcolor{blue}{\href{mailto:henryfl@umich.edu}{henryfl@umich.edu}}}
\address{Department of Mathematics, University of Michigan, Ann Arbor, MI 48109}
\author{Steven J. Miller}
\email{\textcolor{blue}{\href{mailto:sjm1@williams.edu}{sjm1@williams.edu}},  \textcolor{blue}{\href{Steven.Miller.MC.96@aya.yale.edu}{Steven.Miller.MC.96@aya.yale.edu}}}
\address{Department of Mathematics and Statistics, Williams College, Williamstown, MA 01267}
\author{Eyvindur A. Palsson}
\email{\textcolor{blue}{\href{mailto:palsson@vt.edu}{palsson@vt.edu}}}
\address{Department of Mathematics, Virginia Tech, Blacksburg, VA 24061}
\author{Ethan Pesikoff}
\email{\textcolor{blue}{\href{mailto:ethan.pesikoff@yale.edu}{ethan.pesikoff@yale.edu}}}
\address{Department of Mathematics, Yale University, New Haven, CT 06511}
\author{Charles Wolf}
\email{\textcolor{blue}{\href{mailto:charles.wolf@rochester.edu}{charles.wolf@rochester.edu}}}
\address{Department of Mathematics, Rochester, NY, 14627}
\thanks{This work was supported by NSF grant 1947438 and Williams College. E. A. Palsson was supported in part by Simons Foundation grant $\#$360560.}
\subjclass[2020]{52C10 (primary), 52C35 (secondary)}
\keywords{Erdős Distance Problems, distinct angles, optimal point configuration, congruent triangles, discrete geometry}
\date{\today}
\begin{document}

\maketitle

\begin{abstract}
    Let $P(k)$ denote the largest size of a non-collinear point set in the plane admitting at most $k$ angles. We prove $P(1)=3$, $P(2)=5$ and $P(3)=5$, and we characterize the optimal sets.  We also leverage results from \cite{GenPaper} in order to provide the general bounds of $k+2 \leq P(k) \leq 6k$, although the upper bound may be improved pending progress toward the Weak Dirac Conjecture.  Notably, it is surprising that $P(k)=\Theta(k)$ since, in the distance setting, the best known upper bound on the analogous quantity is quadratic, and no lower bound is well-understood.
\end{abstract}

\tableofcontents

\section{Introduction}
\subsection{Background}
In 1946, Erdős introduced the problem of finding asymptotic bounds on the minimum number of distinct distances among sets of $n$ points in the plane \cite{ErOg}. The Erd\H{o}s distance problem, as it has become known, proved infamously difficult and was only finally (essentially) resolved by Guth and Katz in 2015 \cite{GuthKatz}.

The Erd\H{o}s distance problem has also spawned a wide variety of related questions, including the problem of finding maximal point sets with at most $k$ distinct distances. Erd\H{o}s and Fishburn determine maximal planar sets with at most $k$ distinct distances \cite{ErFi}. Recent results by Szöll\H{o}si and Östergård classify the maximal 3-distance sets in $\mathbb{R}^4$, 4-distance sets in $\mathbb{R}^3$, and 6-distance sets in $\mathbb{R}^2$ \cite{Xi,SzOs}. In \cite{ELMP, BrDePaSe, BrDePaSt} point sets with a low number of distinct triangles in Euclidean space are investigated. In \cite{GenPaper}, a number of angle analogues of distinct distance problems are considered.  Recently, new connections to frame theory and engineering have renewed interested in few-distance sets \cite{SzOs}.

Characterizing the largest possible point sets satisfying a given property in this way is a classic problem in discrete geometry. As another example, Erd\H{o}s introduced the problem of finding maximal point sets of all isosceles triangles in 1947 \cite{ErKeIsosSets}. Ionin completely answers this question in Euclidean space of dimension at most $7$ \cite{Io}.

We study one variation of a related problem of Erdős and Purdy \cite{ErPur}. They asked about $A(n)$, the minimum number of distinct angles formed by $n$ not-all-collinear points in the plane.  Recently, \cite{GenPaper} made partial progress on this problem, and the best known bounds are $n/6\leq A(n)\leq n-2$.  We consider the related problem of maximal planar point sets admitting at most $k$ distinct angles in $(0, \pi)$.  We ignore angles of $0$ and $\pi$ so as to align the convention in related research (see \cite{PaSha}, for example), although we provide results including the 0 angle as corollaries. We completely answer this question for $k=2$, and $k=3$ and note that the work from \cite{GenPaper} immediately implies asymptotically tight linear bounds for $k > 3$. In answering this question for $k=2$ and $k=3$, we systematically consider all possible triangles in such configurations and then reduce to adding points in a finite number of positions by geometric casework.  We thus both find $P(2),P(3)$ and classify all optimal configurations.
 
\subsection{Definitions and Results}
By convention, we only count angles of magnitude strictly between $0$ and $\pi$. Our computations still answer the related optimal point configuration questions including $0$ angles (see Corollaries \ref{cor: p2-with-0}, \ref{cor: p3-with-0}).
We begin by introducing convenient notation:
\begin{definition}
Let $\mathcal{P} \subset \mathbb{R}^2$. Then
\[
A(\mathcal{P}) 
\coloneqq \#\{|\angle abc|\in(0,\pi) \,:\, a,b,c \text{ distinct, } a,b,c \in \mathcal{P}\},
\]
\end{definition}

Now we define the quantity we are interested in studying.
\begin{definition}
\[
P(k) \coloneqq \max \{\# \mathcal{P} \,:\, \mathcal{P} \subseteq \mathbb{R}^2, \text{ not all points in }\mathcal{P}\text{ are collinear, } A(\mathcal{P}) \leq k\}. 
\]
\end{definition}

We first provide general linear lower and upper bounds for $P(k)$. In particular, we have the following theorem.

\begin{theorem}\label{thm: linear bounds on P(k)}
For all $k \geq 1$,
\begin{alignat*}{2}
2k+3 \, &\leq \, P(2k)  &&\leq \, 12k \\
2k+3 \, &\leq \, P(2k+1) &&\leq \, 12k + 6.
\end{alignat*}
\end{theorem}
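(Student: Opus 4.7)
The strategy is to produce the lower bounds via explicit regular-polygon constructions and to obtain the upper bounds by directly invoking the linear lower bound $A(n) \geq n/6$ from \cite{GenPaper}.

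For the odd case $P(2k+1) \geq 2k+3$, I plan to take the $2k+3$ vertices of a regular $(2k+3)$-gon inscribed in a circle. By the inscribed angle theorem, any triple of vertices partitions the circle into arcs of positive integer lengths $s_1, s_2, s_3$ (in units of $2\pi/(2k+3)$) summing to $2k+3$, and the three interior angles equal $\pi s_i/(2k+3)$. Since each $s_i \in \{1, \dots, 2k+1\}$, this yields exactly $2k+1$ distinct angles, giving a valid configuration. For the even case $P(2k) \geq 2k+3$, I would append the center $O$ to the vertices of a regular $(2k+2)$-gon. The vertices alone contribute the $2k$ distinct angles $\pi s/(2k+2)$ for $s \in \{1, \dots, 2k\}$ by the same inscribed angle argument. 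The key verification is that triangles using $O$ introduce no new angles: for non-antipodal $v_i, v_j$ separated by $m := \min(|i-j|,\, 2k+2-|i-j|) \in \{1, \dots, k\}$ arc steps, the isoceles triangle $O v_i v_j$ has apex angle $\pi m/(k+1) = \pi(2m)/(2k+2)$ (matching $s = 2m$) and base angles $\pi(k+1-m)/(2k+2)$ (matching $s = k+1-m$), both already in the polygon's angle set; the antipodal case $m = k+1$ is collinear and contributes no triangle.

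For the upper bound I would directly apply $A(n) \geq n/6$ from \cite{GenPaper}: any non-collinear $\mathcal{P}$ with $|\mathcal{P}| = n$ and $A(\mathcal{P}) \leq k$ satisfies $n \leq 6k$. Specializing to $k = 2j$ gives $P(2j) \leq 12j$, and $k = 2j+1$ gives $P(2j+1) \leq 12j + 6$, matching the stated bounds.

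The upper bound is essentially automatic once one cites \cite{GenPaper}. The substantive effort, and the main (though mild) obstacle, lies in the even lower bound verification: one must confirm that adjoining the center creates no angle outside the polygon's own angle set. Once the apex and base angles of $O v_i v_j$ are written with the common denominator $2k+2$, this becomes a routine arithmetic match, which is why I expect no further difficulty.
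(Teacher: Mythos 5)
Your proof is correct, and its upper-bound half is exactly the paper's: both of you get $P(k)\leq 6k$ by citing the bound $A(n)\geq n/6$ from \cite{GenPaper} and then specializing to even and odd $k$. The lower bound is where you genuinely diverge, and in a way that matters. The paper constructs nothing: it cites $A(n)\leq n-2$ from \cite{GenPaper}, deduces $P(k)\geq k+2$, and records the combined bound as the proposition $k+2\leq P(k)\leq 6k$ in Section 2. But substituting into $P(k)\geq k+2$ yields only $P(2k)\geq 2k+2$, one short of the even-case claim $P(2k)\geq 2k+3$ in the theorem statement; that stronger bound requires precisely the configuration you build, the regular $(2k+2)$-gon with its center $O$ adjoined. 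Your verification of it is sound: the apex angles $\pi(2m)/(2k+2)$ and base angles $\pi(k+1-m)/(2k+2)$ for $m\in\{1,\dots,k\}$ all lie in the polygon's own angle set $\{\pi s/(2k+2): 1\leq s\leq 2k\}$, the antipodal case is degenerate, and since every angle involving $O$ is an angle of some triangle $Ov_iv_j$ (either the apex or a base angle), your enumeration is exhaustive. Your odd-case construction, the regular $(2k+3)$-gon with exactly $2k+1$ inscribed angles, is essentially the construction underlying \cite{GenPaper}'s bound $A(n)\leq n-2$. In short: the paper's route is a two-line reduction to known results, at the cost that, read literally, its Section 2 argument does not reach the even-case constant $2k+3$; your route is self-contained and explicitly substantiates both parities of the stated theorem.
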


In the distance setting, the best known upper bound on the analogous parameter is the quadratic $(2+k)(1+k)$, and no lower bound is well-understood \cite{SzOs}.  It is therefore interesting and surprising that we find $P(k)=\Theta(k)$ in the angle setting.  We prove Theorem \ref{thm: linear bounds on P(k)} in Section \ref{sec: gen bds}.

Furthermore, we explicitly compute $P(1), P(2)$, and $P(3)$ and exhaustively identify all maximal point configurations for each.

\begin{prop} \label{thm: P(1) = 3}
    We have $P(1) = 3$, and the equilateral triangle is the unique maximal configuration.
\end{prop}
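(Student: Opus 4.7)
The lower bound $P(1) \geq 3$ is immediate from exhibiting the equilateral triangle, whose only angle is $\pi/3 \in (0,\pi)$. For uniqueness among $3$-point configurations, observe that a non-collinear triple forms a triangle, all three of whose angles lie in $(0,\pi)$ and sum to $\pi$; if only one angle value is permitted, it must equal $\pi/3$, forcing the triangle to be equilateral. The substantive content is therefore the upper bound $P(1) \leq 3$.

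My plan is to argue by contradiction: suppose $\mathcal{P}$ is a set of four non-collinear points with $A(\mathcal{P}) \leq 1$. The preceding observation already forces the unique angle to be $\pi/3$, and moreover it forces every \emph{non-collinear} triple from $\mathcal{P}$ to form an equilateral triangle. From here I split based on collinearity patterns among the four points. Since two distinct triples sharing two points determine the same line, either no three of the four points are collinear, or exactly three are collinear (the all-four-collinear case being excluded by hypothesis).

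In the \textbf{no three collinear} case, every triple is equilateral. Fix three of the points as $A,B,C$, an equilateral triangle with side length $s$. The fourth point $D$ must satisfy $|DA|=|DB|=s$ (from triangle $DAB$), and the two choices are $C$ and the reflection $C'$ of $C$ across line $AB$; since $D \neq C$, we have $D = C'$. Applying the same reasoning to triangle $DAC$ forces $D = B'$, the reflection of $B$ across line $AC$. A direct computation (or symmetry of the equilateral configuration) shows $B' \neq C'$, giving the contradiction. In the \textbf{exactly three collinear} case, say $A,B,C$ are collinear and $D$ is off the line, each of $\{D,A,B\}$, $\{D,B,C\}$, $\{D,A,C\}$ is non-collinear and hence equilateral. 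Equilaterality of $DAB$ and $DBC$ gives $|AB| = |DB| = |BC|$, so $|AC| = 2|AB| = 2|DA|$, which makes triangle $DAC$ degenerate and hence not equilateral, a contradiction.

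Combining the two cases rules out $|\mathcal{P}| = 4$, so $P(1) = 3$, and the uniqueness argument above identifies the equilateral triangle as the only maximal configuration. The main (mild) obstacle is ensuring the collinearity casework is exhaustive; this is handled by the observation that a fourth point added to a collinear triple lies either on the line (excluded) or off it, and no other collinear triples can form without collapsing to the all-collinear case.
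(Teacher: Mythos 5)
Your proof is correct and takes essentially the same approach as the paper, whose entire argument is the observation that a single angle forces every non-collinear triple to form an equilateral triangle, which is impossible for more than three points; you merely supply the collinearity casework and circle-intersection details that the paper leaves implicit. One small wording fix: in your collinear case, $\triangle DAC$ is not degenerate (since $D$ is off the line), it simply fails to be equilateral because $|AC| = 2|DA| \neq |DA|$ --- the contradiction stands either way.
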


In order to have only a single angle, every triangle of three points in the configuration must be equilateral. As this is impossible for point configurations that are not the vertices of an equilateral triangle, $P(1) = 3$. $P(2)$ and $P(3)$ are considerably less trivial quantities. We calculate $P(2), P(3)$ via exhaustive casework, simultaneously characterizing all of the unique optimal point configurations up to rigid motion transformations and dilation about the center of the configuration.  We proceed by first considering sets of three points and then search for what additional points may be added without determining too many angles. We prove Theorem \ref{thm: P(2) = 5} in Section \ref{sec: P(2)} and Theorem \ref{thm: P(3) = 5} in Section \ref{sec: P(3)}.

\begin{theorem} \label{thm: P(2) = 5}
    We have $P(2) = 5$. Moreover, the unique optimal point configuration is four vertices in a square with a fifth point at its center (see A in Figure \ref{fig: 3.thm}).
\end{theorem}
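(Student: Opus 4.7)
The plan is to establish the lower bound $P(2) \geq 5$ by exhibiting the square-plus-center configuration (the four vertices $(\pm 1, \pm 1)$ together with the origin), for which a direct computation shows that every non-degenerate triangle among these five points is right isosceles with angle multiset $\{\pi/4,\pi/4,\pi/2\}$; hence only the two angles $\pi/4$ and $\pi/2$ appear.

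For the matching upper bound, the key structural reduction is that if $\mathcal{P}$ has at most two distinct angle values $\{\alpha,\beta\}$, then every triangle in $\mathcal{P}$ has its three angles drawn from this set and summing to $\pi$. This leaves only three possible angle multisets for a triangle: the equilateral $(\pi/3,\pi/3,\pi/3)$; the isosceles $(\alpha,\alpha,\beta)$ with $2\alpha+\beta=\pi$; or the isosceles $(\beta,\beta,\alpha)$ with $2\beta+\alpha=\pi$. If every triangle is equilateral then all pairwise distances agree and $|\mathcal{P}|\leq 3$, while if both isosceles relations held simultaneously one would again be forced to $\alpha=\beta=\pi/3$. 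Hence for $|\mathcal{P}|\geq 4$ every triangle in $\mathcal{P}$ is isosceles of one fixed similarity type $(\alpha,\alpha,\pi-2\alpha)$ with $\alpha\neq\pi/3$.

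Given this reduction, I would proceed by exhaustive geometric casework. Fix three points $A,B,C$ of $\mathcal{P}$ forming such an isosceles triangle with apex at $C$, so $|CA|=|CB|$. To add a fourth point $D$, each of the three new triangles $ABD$, $BCD$, $ACD$ must itself be isosceles of type $(\alpha,\alpha,\pi-2\alpha)$; the isosceles condition places $D$ on a union of perpendicular bisectors and circles determined by $A,B,C$, while the angle condition cuts this locus down to finitely many candidates, each corresponding to a specific value of $\alpha$. Using the $A\leftrightarrow B$ symmetry of the starting triangle, one enumerates a short list of $4$-point extensions, including the vertex set of a square (with $\alpha=\pi/4$). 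For each such $4$-point extension I would then attempt to add a fifth point $E$: the six new triangles give enough constraints to rule out every $4$-point candidate except the square, for which the unique admissible $E$ turns out to be the center of the square. A final check verifies that no sixth point can be appended to the square-plus-center configuration without creating a third distinct angle, completing the proof of $P(2)=5$ and of the claimed uniqueness.

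The main obstacle is the combinatorial branching when classifying admissible positions for $D$ and then $E$: for each new triangle there are three choices of which edge serves as the isosceles base, and these choices interact nonlinearly through the shared value of $\alpha$. I would control this by exploiting the reflection symmetry of the initial isosceles triangle, by combining the relation $2\alpha+\beta=\pi$ with the individual triangle constraints to pin $\alpha$ down to finitely many candidate values in each branch, and by pruning branches as soon as any edge-length or angle condition is violated.
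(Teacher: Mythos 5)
Your overall strategy is essentially the paper's: show every triangle must be isosceles of one fixed similarity type $(\alpha,\alpha,\pi-2\alpha)$, then run exhaustive geometric casework on where a fourth and fifth point may be placed. (Your arithmetic disposal of the equilateral case --- no triangle with angles drawn from $\{\pi/3,\beta\}$, $\beta\neq\pi/3$, can contain $\beta$ --- is in fact slicker than the paper's region-by-region argument, though it silently needs a word on collinear triples before concluding that ``all pairwise distances agree.'') However, there is one genuine gap in your upper-bound casework: you require that \emph{each} of the triples $ABD$, $BCD$, $ACD$ formed with the new point be a nondegenerate isosceles triangle of the fixed type, and likewise all six triples with the fifth point $E$. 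Since angles of $0$ and $\pi$ are excluded from the count, a new point is also permitted to be \emph{collinear} with two existing points, in which case that triple is degenerate and imposes no isosceles constraint. Your candidate locus (perpendicular bisectors and circles coming from the isosceles conditions) therefore omits placements on the lines $AB$, $BC$, $CA$ --- and these are exactly where the optimum lives: the midpoint of the hypotenuse of the right isosceles triangle (an admissible fourth point, on the line $BC$) and the center of the square (which lies on both diagonals, so the five-point optimum contains two degenerate triples). Executed as written, your search would prune the center point and you would wrongly conclude $P(2)=4$ with the square as the unique extremal set.

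The repair is what the paper does in its Cases 5 and 6: treat placements of the new point on the lines through pairs of existing points as separate cases, constraining only the nondegenerate angles created (for instance, $D$ in the interior of segment $BC$ still forms genuine triangles $ABD$ and $ACD$, and the splitting of the apex angle forces $\alpha = \pi/2$, $\beta = \pi/4$ with $D$ the midpoint). With that amendment, and the analogous allowance when testing compatibility of fifth points, your plan goes through and reproduces the paper's conclusion. One smaller inaccuracy to correct along the way: it is not true that the square is the only four-point extension admitting a fifth point --- the right isosceles triangle together with the midpoint of its hypotenuse is another four-point candidate that extends (by adjoining the fourth vertex of the square) --- but all such completions coincide with the square plus its center, so the uniqueness statement survives.
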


\begin{theorem} \label{thm: P(3) = 5}
    We have $P(3) = 5$. There are 5 unique optimal configurations, shown in Figure \ref{fig: 3.thm}.
\end{theorem}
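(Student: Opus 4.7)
The lower bound $P(3) \geq 5$ is immediate from Theorem \ref{thm: P(2) = 5}, since any configuration with at most $2$ distinct angles has at most $3$. So the real work is the upper bound $P(3)\le 5$ together with the classification of all maximal configurations. I would mirror the strategy used in Section \ref{sec: P(2)}: pick an arbitrary triangle $T = \{a,b,c\}$ inside the configuration $\mathcal{P}$, enumerate the angles $A_T$ that $T$ already contributes, and then analyze where further points may be added without exhausting the angle budget of size $3$.

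The casework splits naturally on $|A_T|\in\{1,2,3\}$. In all three cases the main tool is the inscribed-angle theorem: for any existing edge $\overline{xy}\subset\mathcal{P}$ and any allowed angle $\theta\in A_T\cup\{\alpha,\beta,\dots\}$, every new point must lie on one of the two circular arcs subtending $\overline{xy}$ at angle $\theta$. Thus every added point lies in the intersection of three such arc-families coming from the three edges of $T$, which is a finite (and in practice very small) set of candidate locations. A secondary constraint is that any pair of added points must themselves subtend, along with each point of $T$, only permitted angles; this kills most of the surviving candidates.

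Organized more concretely, I would run:
\begin{enumerate}
\item[(i)] $T$ equilateral. Two angle slots remain. Identify all placements of one extra point via the arc loci, then iterate; the casework terminates in configurations that include the regular pentagon and the square-with-center, contributing several of the five listed configurations.
\item[(ii)] $T$ isosceles but not equilateral, with apex angle $\alpha$ and base angles $(\pi-\alpha)/2$. Only one free angle slot is left. Parameterizing the added point's arc positions and imposing the extra-triangle constraints typically forces $\alpha$ to land in a very short finite list of special values, yielding the remaining symmetric configurations.
\item[(iii)] $T$ scalene. The budget is already saturated by $A_T$, so every added point must view each edge of $T$ at an angle in $A_T$. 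This is a rigid intersection-of-three-arcs condition with at most finitely many solutions, giving at most one further configuration (possibly none).
\end{enumerate}
In each branch, whenever a five-point configuration survives I would verify by direct enumeration that it realizes exactly three angles, and conversely that trying to add a sixth point introduces a fourth angle, establishing both the upper bound $P(3)\le 5$ and uniqueness up to rigid motion and dilation. Collating the survivors from the three branches should give exactly the five configurations depicted in Figure \ref{fig: 3.thm}.

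The hardest part will be the isosceles subcase (ii): with two prescribed angles and one free, the space of candidate fourth and fifth points is substantially larger than in the $P(2)$ proof, and many near-miss configurations (e.g., a triangle plus two arc-points) must be ruled out by tracking how every \emph{new} pair of points interacts with every old point. Careful exploitation of the reflection symmetry of an isosceles triangle, together with angle-sum identities at each vertex, will be essential to keep the enumeration finite and to confirm that no maximal configuration has been missed.
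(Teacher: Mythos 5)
Your overall route is genuinely different from the paper's, but as written it has three concrete gaps, the first of which breaks your central reduction. You claim that every added point lies in a \emph{finite} intersection of arc-families subtending the edges of $T$ at allowed angles. This is false: the entire circumcircle of $T$ lies in all three arc-families at once, since by the inscribed angle theorem every point of it sees each edge of $T$ at an angle already in $A_T$ (or its supplement, with the correct-side arc always available). So intersecting edge-subtension loci leaves one-dimensional families of candidates, and finiteness only comes from the constraints you omit, namely the angles formed \emph{at the old vertices} ($\angle pab$, $\angle pba$, etc.), which confine $p$ to finitely many rays through $a,b,c$; your ``secondary constraint'' concerns pairs of added points and does not supply this. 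Second, your framework is blind to collinear placements: the assertion that a new point must lie on a circular arc subtending $\overline{xy}$ at some $\theta\in(0,\pi)$ fails when the new point is collinear with existing points (the angle is $0$ or $\pi$ and uncounted), yet these degenerate placements are exactly where the optimal configurations live --- the fifth point of the square-with-center sits on both diagonals, and several of the five configurations of Figure \ref{fig: 3.thm} contain collinear triples. Third, in your branches (i) and (ii) the allowed-angle set contains unknown free slots, so the loci are parametrized by undetermined angles; the phrase ``typically forces $\alpha$ to land in a very short finite list'' is not an argument but is precisely the hard content to be proved, and you give no mechanism for it.

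The paper avoids all three difficulties by decomposing along convex-hull structure rather than along a base triangle. Lemma \ref{lem: convex quad 3 angles} shows, using only the quadrilateral angle sum and the fact that any angle splitting an acute angle $\beta$ must equal $\beta/2$, that a convex quadrilateral with at most three angles is a rectangle, two glued equilateral triangles, or four vertices of a regular pentagon --- the angle values $(\pi/6,\pi/3,2\pi/3)$ and $(\pi/5,2\pi/5,3\pi/5)$ drop out algebraically with no continuum parameter ever appearing, which is the paper's substitute for your unresolved step (ii). Lemmas \ref{lem: convex hull triangle} and \ref{lem: no pts in triangle} then handle exactly the collinear and interior-point configurations (cases 2.a--2.c) that your arc loci cannot see, with Lemma \ref{lem: no pts in triangle} forcing any interior point into the center of an equilateral triangle. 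If you want to salvage your triangle-first plan, you would need to (a) replace the arc-intersection finiteness claim by the vertex-ray constraints, (b) add a separate exhaustive analysis of points on lines through pairs of existing points, and (c) actually carry out the angle-coincidence analysis in the one- and two-free-slot cases; at that point the bookkeeping is comparable to, and less structured than, the paper's quadrilateral classification.
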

\begin{figure}[h!] \centering
\includegraphics{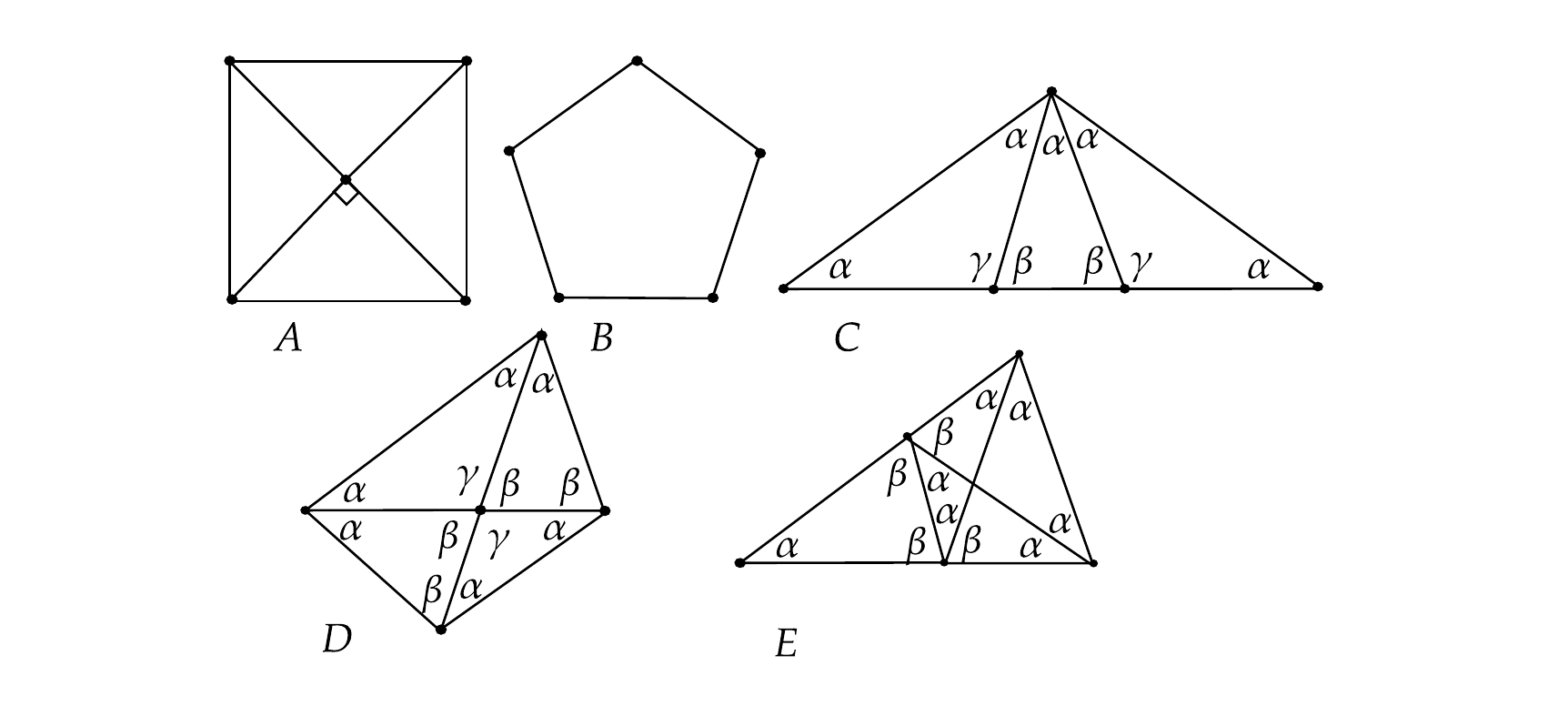}
\caption{Optimal Two and Three Angle Configurations. $\alpha = \frac{\pi}{5}, \beta = \frac{2\pi}{5}, \gamma = \frac{3\pi}{5}$.} \label{fig: 3.thm}\end{figure}


\section{General Bounds} \label{sec: gen bds}
Although one may in principle calculate $P(k)$ for any $k$ by extensive casework (as we later calculate $P(2),P(3)$), it quickly becomes overwhelming. As such, we instead provide general bounds on $P(k)$.  In \cite{GenPaper} the authors study the quantity $A(n)$, the minimum number of angles admitted by a non-collinear point set of $n$ points in the plane.  They show in Lemma 2.2 and Theorem 2.5 that $n/6\leq A(n)\leq n-2$, noting that the lower bound may be improved up to as much as $n/4-1$, pending progress on the Weak Dirac Conjecture.  Since $A(n)\leq n-2$, then $n\geq A(n)+2$, and so we deduce that $P(k)\geq k+2$.  Similarly, we have $P(k)\leq 6k$.  Combining these bounds gives the desired result
\begin{prop}
    $k+2\leq P(k)\leq 6k$.
\end{prop}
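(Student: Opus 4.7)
The plan is to derive both bounds directly from the corresponding bounds on $A(n)$ quoted from \cite{GenPaper} via the near-inverse relationship between $A$ and $P$. Concretely, observe that $P(k)\geq n$ holds if and only if there exists a non-collinear planar point set of size $n$ with at most $k$ angles, which is in turn equivalent to $A(n)\leq k$. So I would begin the proof by spelling out this duality as a single short lemma-like remark, since all the work has effectively been done in \cite{GenPaper}.

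For the lower bound $P(k)\geq k+2$, I would take $n=k+2$ and invoke Theorem 2.5 of \cite{GenPaper}, which gives $A(k+2)\leq (k+2)-2=k$. By the duality above, there exists a non-collinear configuration of $k+2$ points admitting at most $k$ distinct angles, so $P(k)\geq k+2$.

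For the upper bound $P(k)\leq 6k$, I would argue by contradiction (or contrapositive): suppose $\mathcal{P}$ is a non-collinear planar set witnessing $P(k)$, so $\#\mathcal{P}=P(k)$ and $A(\mathcal{P})\leq k$. Apply Lemma 2.2 of \cite{GenPaper} to get
\[
\frac{P(k)}{6}\;\leq\;A(P(k))\;\leq\;A(\mathcal{P})\;\leq\;k,
\]
where the middle inequality uses the definition of $A(n)$ as a minimum over all non-collinear $n$-point sets. Rearranging yields $P(k)\leq 6k$.

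The main obstacle is essentially nonexistent, since all the combinatorial content has been offloaded to \cite{GenPaper}; the only point that warrants care is making the $A$--$P$ duality precise so that the quoted inequalities can be inverted in the correct direction. I would also mention, as the paper already does in passing, that any future improvement of the lower bound on $A(n)$ (e.g.\ toward $n/4-1$ conditional on the Weak Dirac Conjecture) immediately propagates to an improved upper bound on $P(k)$ by the same argument.
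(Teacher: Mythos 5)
Your proposal is correct and follows essentially the same route as the paper: both bounds are obtained by inverting the inequalities $n/6 \leq A(n) \leq n-2$ from \cite{GenPaper}, exactly as the paper does. Your version merely spells out the $A$--$P$ duality more explicitly than the paper's two-line deduction, which is a fair (and arguably clearer) presentation of the identical argument.
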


Notably, it is surprising that $P(k)=\Theta(k)$ since, in the distance setting, the best known upper bound on the analogous quantity is quadratic, and no lower bound is well-understood.

\section{Computing $P(2) = 5$} \label{sec: P(2)}

\begin{proof}
In any point configuration with at least three points, there are triangles. For any point configuration with at most two angles, all triangles must be isosceles.  We divide into two cases, based on whether or not there is an equilateral triangle.

\subsection{There is an equilateral triangle}
We consider adding a fourth point in cases (Figure \ref{2.equil}).
\begin{figure}[h!] \centering
\includegraphics[scale=0.8]{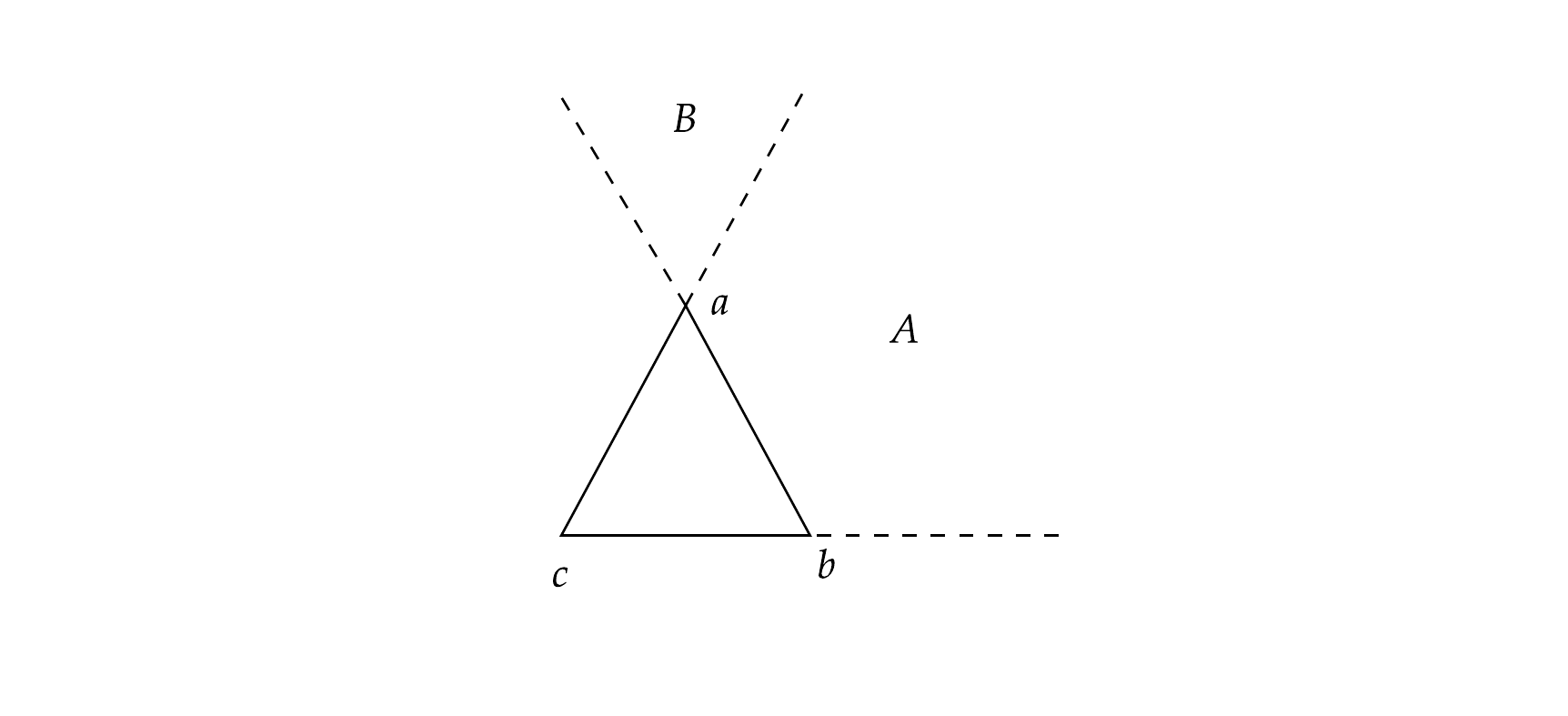}
\caption{Equilateral Triangle Regions} \label{2.equil} \end{figure}

\begin{description}
\item[\textit{Case 1}] $p \in A$.\\
Then $\angle acp < \pi/3$ and $\angle cap > \pi/3$, leading to more than two angles.
\item[\textit{Case 2}] $p \in \overline{ab}$.\\
Then $\angle bcp < \pi/3$ and one of $\angle cpb$ and $\angle apc \geq \pi/2$, leading to more than two angles. 
\item[\textit{Case 3}] $p \in \dvec{ac}$ to the upper-right of $a$.\\
Then $\angle cbp > \pi/3$ and $\angle cpb < \pi/3$, again leading to more than two angles. 
\item[\textit{Case 4}] $p \in B$.\\
In this case, $\angle cbp > \pi/3$ and $\angle cpb < \pi/3$, leading to more than two angles.
\item[\textit{Case 5}] $p \in \triangle{abc}$.\\
In this case, one of $\angle apb, \angle bpc, \angle cpa \geq 2\pi/3$ and $\angle acp < \pi/3$, leading to more than two angles.
\end{description}

Up to symmetry, these cases are exhaustive. Thus if there is an equilateral triangle in the configuration, there can only be at most three points.

\subsection{There is no equilateral triangle}

Now, let $a$, $b$, and $c$ be the vertices of an isosceles triangle with vertex angle $\alpha$, base angle $\beta$, and $a$ the apex vertex.
We reduce the number of possibilities for additional points by partitioning the plane into regions $A_i$ (Figure \ref{2.regions}).
\begin{figure}[h!] \centering
\includegraphics{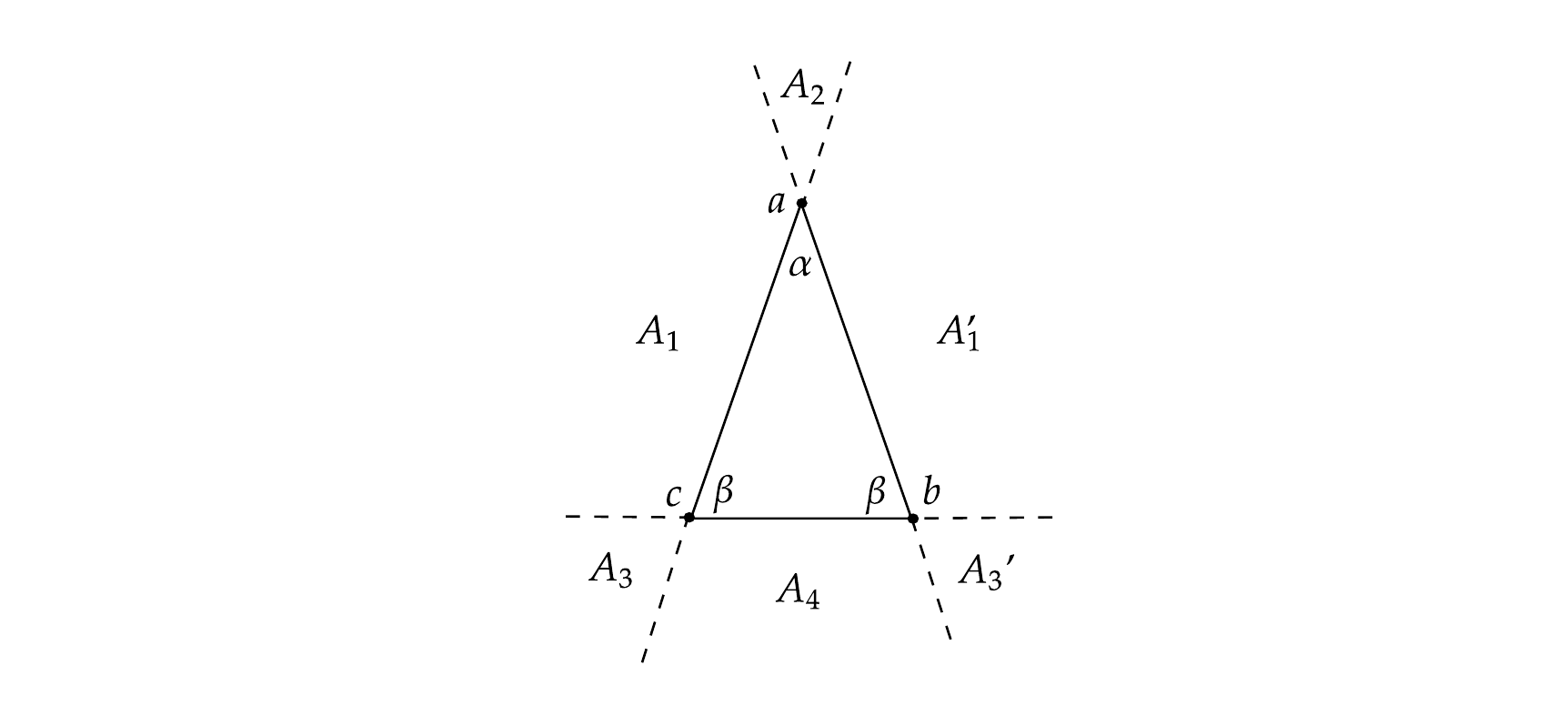}
\caption{Isosceles Triangle Regions.} \label{2.regions} \end{figure}
Note that we may without loss of generality assume that no fourth point is added within $\triangle abc$ as we could then choose one of the resultant interior triangles as our initial triangle. Also note that $A_1$ and $A_1'$ and $A_3$ and $A_3'$ are equivalent up to symmetry.
\begin{description}
\item[\textit{Case 1}] $p \in A_1$.\\
In this case, $\angle pab > \alpha$ and $\angle pcb > \beta$. So, regardless of whether $\alpha$ or $\beta$ is greater, adding $p$ introduces an additional angle. So, no additional points can be in $A_1$ or $A_1'$. 

\item[\textit{Case 2}] $p \in A_2$.\\
In this case, $\angle pcb$ and $\angle pbc$ are greater than $\beta$, so both must be $\alpha$ to not add additional angles. But then $\angle cpb = \pi - 2\alpha \neq \beta$, in order to not add angles, implying $3 \alpha = \pi$. But, this implies $\triangle pcb$ is an equilateral triangle. Thus no points may be added in this case.

\item[\textit{Case 3}] $p \in A_3$ (or $A_3'$ by symmetry). \\
In this case, $\angle bap > \alpha$ and $\angle abp > \beta$, so there is an additional angle added regardless and no additional points are possible.

\item[\textit{Case 4}] $p \in A_4$. \\
In this case, $\angle cap, \angle bap < \alpha$, so both must equal $\beta$. Therefore, $2 \beta = \alpha$, which implies $\beta = \pi/4$ and $\alpha = \pi /2$. Moreover, since $\angle acp$ and $\angle abp$ are greater than $\beta$, they must both equal $\alpha = \pi/2$. So, the only possibility for an addable point in this case is for $p$ to be the fourth vertex of the square $acpb$.

\item[\textit{Case 5}] $p \in \dvec{bc}$. \\
If $p$ is on $\dvec{bc}$ between $b$ and $c$, then $\angle cap, \angle bap < \alpha$. In order for these not to introduce additional angles, they must both be equal to $\beta$. This implies $\beta = \pi/4$ and $\alpha = \pi/2$ and $p$ is the center of the side $bc$. If $p \in \dvec{bc}$ to the left of $c$ (or by symmetry, right of $b$), $\angle bap > \alpha$ and thus $\angle bap = \beta$. Since $2\beta + \alpha = \pi$, $\beta < \pi/2$. But then $\angle acp > \pi/2 > \beta > \alpha$. Thus there is exactly one point possible on line $\dvec{bc}$, the centerpoint of the edge between $b$ and $c$.

\item[\textit{Case 6}] $p \in \dvec{ac}$ (or $p \in \dvec{ab}$).\\
If $p$ is between $a$ and $c$, then $\angle cbp < \beta$ and thus $\angle cbp = \alpha$. But, as before, $\beta < \pi/2$. Moreover, one of $\angle bpc$ or $\angle bpa$ is at least $\pi/2 > \beta > \alpha$. Thus there are too many angles in this case. If $p$ is to the bottom left of $c$, $\angle apb < \beta$ and thus $\angle apb = \alpha$. But, again, either $\angle bca$ or $\angle bcp > \pi/2 > \beta$, creating too many angles in this case. If $p$ is on $\dvec{ac}$ to the upper right of $a$, $\angle pbc > \beta$ and thus equals $\alpha$. Then $\angle pba < \alpha$ and must equal $\beta$ and thus $2\beta = \alpha$. This implies $\beta = \pi/4$ and $\alpha = \pi/2$ and $\triangle cbp$ is an isosceles right triangle with $b$ the apex vertex, $p$ on $\dvec{ac}$ to the upper right of $a$, and $a$ at the center of side $\overline{pc}$.
\end{description}

As such, in order to add additional points to an isosceles triangle point configuration without adding additional angles, we must have $\alpha = \pi/2$ and $\beta = \pi/4$. The four additional possible points are marked in Figure \ref{2.points}.
\begin{figure}[h!] \centering
\includegraphics{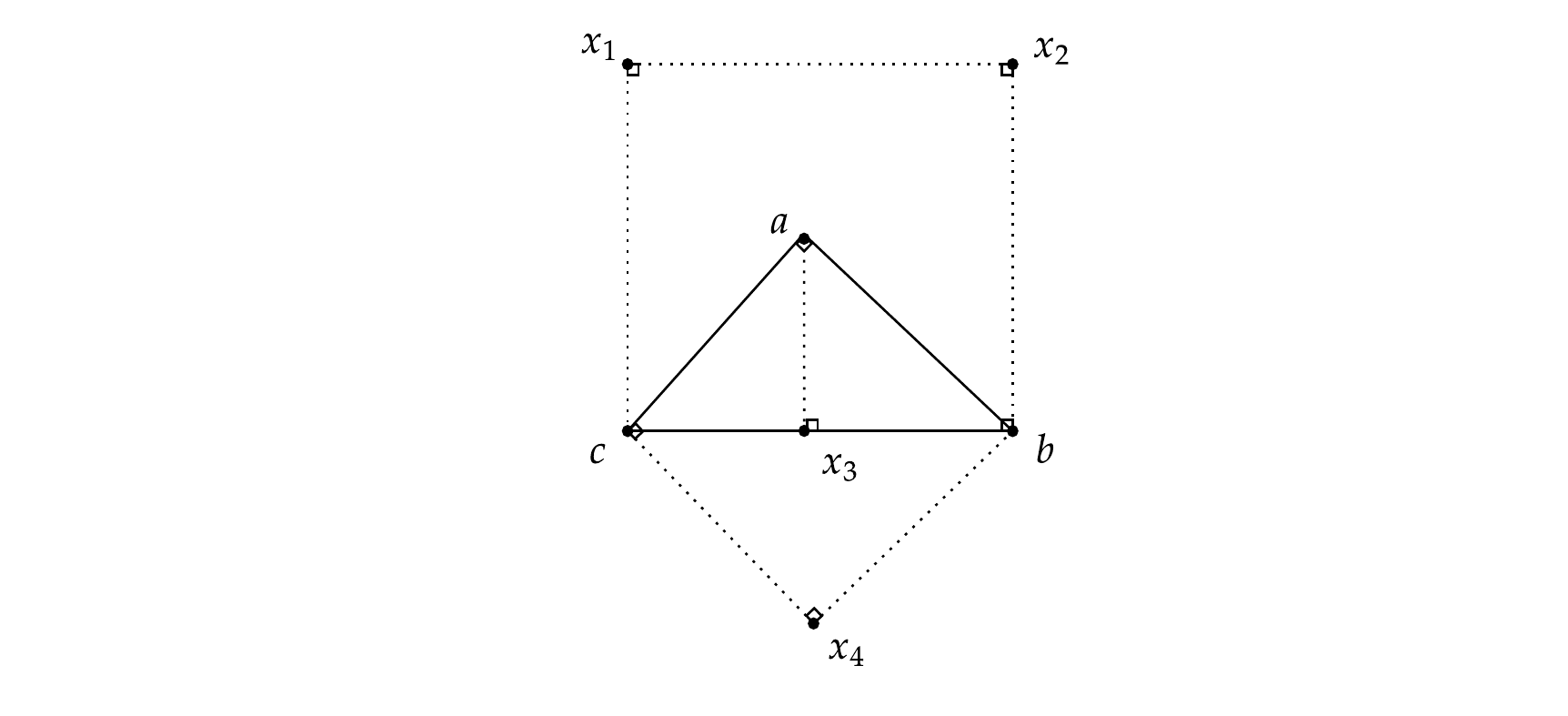}
\caption{Compatible Points with the Right Triangle.} \label{2.points} \end{figure}

Note that $\angle x_4ax_1, \angle x_4 a x_2 > \pi/2$. So, $x_4$ cannot be in the same point configuration as $x_1$ or $x_2$. By symmetry the same follows for $x_3$.  However, we may have both $x_1$ and $x_2$ or both $x_3$ and $x_4$, either of which give the unique optimal configuration $A$ in Figure \ref{fig: 3.thm}. 
\end{proof}

\begin{cor}\label{cor: p2-with-0}
    One might also wish to include the trivial 0-angle in our count.  In this case, $P(2)=4$, and the unique configuration is the square.
\end{cor}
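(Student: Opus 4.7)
The plan is to leverage the full case analysis already carried out in the proof of Theorem~\ref{thm: P(2) = 5}. The key observation is that a configuration $\mathcal{P}$ contributes a $0$-angle to the expanded count exactly when it contains three collinear points, since any such triple $a,b,c$ with $b$ between $a$ and $c$ yields $\angle bac = 0$. Including this value in the count therefore charges any configuration with a collinear triple one extra angle beyond its $(0,\pi)$ count.

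For the lower bound $P(2)\geq 4$, the four vertices of a square work: no three are collinear and only the angles $\pi/4$ and $\pi/2$ appear, giving $A(\mathcal{P})=2$ under the expanded count.

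For the upper bound and uniqueness, I would revisit the outcomes of the case analysis in Section~\ref{sec: P(2)}. The unique five-point optimum from Theorem~\ref{thm: P(2) = 5}, the square with its center, contains collinear triples along its diagonals, and therefore has expanded count $A(\mathcal{P})=3$, ruling it out. Among four-point configurations with no three collinear points, the proof of Theorem~\ref{thm: P(2) = 5} shows that one must extend an isosceles right triangle $abc$ by one of the candidate points $x_1,\dots,x_4$. Of these, only $x_1$ (the fourth vertex of the square) avoids producing a collinear triple; $x_2$ is the midpoint of the hypotenuse $\overline{bc}$, and $x_3,x_4$ lie on the extensions of the legs $\dvec{ac}$ and $\dvec{ab}$, so each yields a collinear triple and contributes a $0$-angle. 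Hence the square is the unique four-point optimum among configurations with no three collinear points.

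The only scenario left outside the framework of Section~\ref{sec: P(2)} is a four-point configuration that itself contains three collinear points. Such a configuration already spends one angle on $0$, leaving at most one angle in $(0,\pi)$ to distribute among the three nondegenerate triangles formed with the off-line point. This forces each of those triangles to be equilateral, which contradicts the collinearity: if $a,b,c$ are collinear with $b$ in the middle then equilaterality of $\triangle abd$ and $\triangle bcd$ forces $|ab|=|bc|$ and hence $|ac|=2|ab|$, while equilaterality of $\triangle acd$ forces $|ac|=|ab|$. The main obstacle is really just the bookkeeping to verify the case analysis is exhaustive; beyond this short supplementary check, everything reduces to the work already done in Section~\ref{sec: P(2)}.
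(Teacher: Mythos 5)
Your proposal is correct and takes essentially the same approach as the paper: both reuse the case analysis from the proof of Theorem~\ref{thm: P(2) = 5}, discard the square-with-center because its diagonals contain collinear triples (hence a $0$-angle), and observe that among the four candidate points in Figure~\ref{2.points} only the fourth vertex of the square avoids collinearity (your labeling of $x_1$ versus $x_2$ is swapped relative to the paper's figure, which is immaterial). Your explicit equilateral-triangle argument ruling out four-point sets that themselves contain a collinear triple is a small supplementary check the paper leaves implicit, and it is correct.
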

\begin{proof}
    The only 5-point configuration no longer holds when we count the 0-angle.  Figure \ref{2.points} displays all valid four point configurations which define only 2 angles excluding 0, as detailed in the proof of $P(2)$.  All the shown points but $x_2$ define a 0-angle, so the only valid 4 point configuration is the square.
\end{proof}

\section{Computing $P(3) = 5$} \label{sec: P(3)}
\begin{lemma}\label{lem: convex quad 3 angles}
Let $ABCD$ be a convex quadrilateral defining three angles or fewer. Then, they form one of the following three configurations:
\begin{center}

\tikzset{every picture/.style={line width=0.75pt}} 

\begin{tikzpicture}[x=0.75pt,y=0.75pt,yscale=-1,xscale=1]

\draw   (443.8,160.21) -- (470.45,77.27) -- (557.97,77.04) -- (585.05,159.84) -- cycle ;
\draw    (513.6,83.44) -- (513.57,71.44) ;
\draw    (462.69,117.58) -- (452.68,113.6) ;
\draw    (576.68,115.28) -- (566.69,119.31) ;
\draw    (513.78,153.44) -- (513.82,166.44) ;
\draw  [fill={rgb, 255:red, 0; green, 0; blue, 0 }  ,fill opacity=1 ] (583.17,159.85) .. controls (583.17,160.78) and (584.02,161.52) .. (585.05,161.52) .. controls (586.09,161.52) and (586.93,160.77) .. (586.93,159.84) .. controls (586.93,158.91) and (586.08,158.16) .. (585.05,158.17) .. controls (584.01,158.17) and (583.17,158.92) .. (583.17,159.85) -- cycle ;
\draw  [fill={rgb, 255:red, 0; green, 0; blue, 0 }  ,fill opacity=1 ] (441.92,160.22) .. controls (441.93,161.14) and (442.77,161.89) .. (443.81,161.89) .. controls (444.84,161.89) and (445.68,161.14) .. (445.68,160.21) .. controls (445.68,159.28) and (444.83,158.53) .. (443.8,158.54) .. controls (442.76,158.54) and (441.92,159.29) .. (441.92,160.22) -- cycle ;
\draw  [fill={rgb, 255:red, 0; green, 0; blue, 0 }  ,fill opacity=1 ] (556.09,77.04) .. controls (556.09,77.97) and (556.94,78.72) .. (557.97,78.71) .. controls (559.01,78.71) and (559.85,77.96) .. (559.85,77.03) .. controls (559.85,76.11) and (559,75.36) .. (557.97,75.36) .. controls (556.93,75.36) and (556.09,76.11) .. (556.09,77.04) -- cycle ;
\draw  [fill={rgb, 255:red, 0; green, 0; blue, 0 }  ,fill opacity=1 ] (468.57,77.27) .. controls (468.57,78.2) and (469.42,78.95) .. (470.45,78.94) .. controls (471.49,78.94) and (472.33,78.19) .. (472.33,77.26) .. controls (472.32,76.33) and (471.48,75.59) .. (470.44,75.59) .. controls (469.41,75.59) and (468.57,76.34) .. (468.57,77.27) -- cycle ;
\draw   (299.36,119.96) -- (343.73,196.8) -- (255,196.8) -- cycle ;
\draw   (299.36,119.96) -- (388.09,119.96) -- (343.73,196.8) -- cycle ;
\draw    (299.36,190.49) -- (299.36,203) ;
\draw    (272.06,157.5) -- (281.16,164.33) ;
\draw    (317.56,164.33) -- (327.8,156.36) ;
\draw    (359.65,156.36) -- (373.3,163.19) ;
\draw    (344.86,126.79) -- (344.86,112) ;
\draw  [dash pattern={on 0.84pt off 2.51pt}]  (586.93,159.84) -- (515.03,210) ;
\draw  [dash pattern={on 0.84pt off 2.51pt}]  (515.03,210) -- (441.92,160.22) ;
\draw  [fill={rgb, 255:red, 0; green, 0; blue, 0 }  ,fill opacity=1 ] (256.88,196.8) .. controls (256.88,195.87) and (256.04,195.12) .. (255,195.12) .. controls (253.96,195.12) and (253.12,195.87) .. (253.12,196.8) .. controls (253.12,197.73) and (253.96,198.48) .. (255,198.48) .. controls (256.04,198.48) and (256.88,197.73) .. (256.88,196.8) -- cycle ;
\draw  [fill={rgb, 255:red, 0; green, 0; blue, 0 }  ,fill opacity=1 ] (301.24,119.96) .. controls (301.24,119.04) and (300.4,118.29) .. (299.36,118.29) .. controls (298.33,118.29) and (297.48,119.04) .. (297.48,119.96) .. controls (297.48,120.89) and (298.33,121.64) .. (299.36,121.64) .. controls (300.4,121.64) and (301.24,120.89) .. (301.24,119.96) -- cycle ;
\draw  [fill={rgb, 255:red, 0; green, 0; blue, 0 }  ,fill opacity=1 ] (345.6,196.8) .. controls (345.6,195.87) and (344.76,195.12) .. (343.72,195.12) .. controls (342.69,195.12) and (341.85,195.87) .. (341.85,196.8) .. controls (341.85,197.73) and (342.69,198.48) .. (343.72,198.48) .. controls (344.76,198.48) and (345.6,197.73) .. (345.6,196.8) -- cycle ;
\draw  [fill={rgb, 255:red, 0; green, 0; blue, 0 }  ,fill opacity=1 ] (389.97,119.96) .. controls (389.97,119.04) and (389.12,118.29) .. (388.09,118.29) .. controls (387.05,118.29) and (386.21,119.04) .. (386.21,119.96) .. controls (386.21,120.89) and (387.05,121.64) .. (388.09,121.64) .. controls (389.12,121.64) and (389.97,120.89) .. (389.97,119.96) -- cycle ;
\draw   (31,114) -- (192,114) -- (192,202) -- (31,202) -- cycle ;
\draw  [fill={rgb, 255:red, 0; green, 0; blue, 0 }  ,fill opacity=1 ] (32.88,114) .. controls (32.88,113.07) and (32.04,112.32) .. (31,112.32) .. controls (29.96,112.32) and (29.12,113.07) .. (29.12,114) .. controls (29.12,114.93) and (29.96,115.68) .. (31,115.68) .. controls (32.04,115.68) and (32.88,114.93) .. (32.88,114) -- cycle ;
\draw  [fill={rgb, 255:red, 0; green, 0; blue, 0 }  ,fill opacity=1 ] (193.88,114) .. controls (193.88,113.07) and (193.04,112.32) .. (192,112.32) .. controls (190.96,112.32) and (190.12,113.07) .. (190.12,114) .. controls (190.12,114.93) and (190.96,115.68) .. (192,115.68) .. controls (193.04,115.68) and (193.88,114.93) .. (193.88,114) -- cycle ;
\draw  [fill={rgb, 255:red, 0; green, 0; blue, 0 }  ,fill opacity=1 ] (32.88,202) .. controls (32.88,201.07) and (32.04,200.32) .. (31,200.32) .. controls (29.96,200.32) and (29.12,201.07) .. (29.12,202) .. controls (29.12,202.93) and (29.96,203.68) .. (31,203.68) .. controls (32.04,203.68) and (32.88,202.93) .. (32.88,202) -- cycle ;
\draw  [fill={rgb, 255:red, 0; green, 0; blue, 0 }  ,fill opacity=1 ] (193.88,202) .. controls (193.88,201.07) and (193.04,200.32) .. (192,200.32) .. controls (190.96,200.32) and (190.12,201.07) .. (190.12,202) .. controls (190.12,202.93) and (190.96,203.68) .. (192,203.68) .. controls (193.04,203.68) and (193.88,202.93) .. (193.88,202) -- cycle ;

\draw (504,227) node [anchor=north west][inner sep=0.75pt]   [align=left] {$\displaystyle 1.c$};
\draw (297,233) node [anchor=north west][inner sep=0.75pt]   [align=left] {$\displaystyle 1.b$};
\draw (98,233) node [anchor=north west][inner sep=0.75pt]   [align=left] {$\displaystyle 1.a$};

\end{tikzpicture}

\end{center}
where $1.a$ is a rectangle, $1.b$ is two attached equilateral triangles, and $1.c$ is four of the five vertices of a regular pentagon.
\end{lemma}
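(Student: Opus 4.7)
The plan is to analyze the convex quadrilateral $ABCD$ by tracking how its two diagonals split each interior angle into two \emph{half-angles}. Let $S$ denote the angle set with $|S|\le 3$. At every vertex $V$, the half-angles $v_{V,1}, v_{V,2}$ and their sum (the interior angle) all lie in $S$, so each vertex is either \emph{balanced} ($v_{V,1}=v_{V,2}$) or \emph{unbalanced} ($v_{V,1}\neq v_{V,2}$). In the unbalanced case the three values $v_{V,1}, v_{V,2}, v_{V,1}+v_{V,2}$ are distinct, forcing $|S|=3$ and $S=\{v_{V,1},v_{V,2},v_{V,1}+v_{V,2}\}$.

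First I would handle the case where all four vertices are balanced. Writing $v_V$ for the common half-angle at $V$, the four triangle-angle identities on the subtriangles $\triangle ABC, \triangle BCD, \triangle CDA, \triangle DAB$ quickly yield $v_A=v_C =: a$, $v_B=v_D =: b$, and $a+b=\pi/2$. If $|S|=2$, then $a=b=\pi/4$, producing a square (a subcase of $1.a$). If $|S|=3$, then $|\{a,b,2a,2b\}|=3$ combined with $a+b=\pi/2$ forces (WLOG) $a=2b$, giving $b=\pi/6$, $a=\pi/3$, and interior angles $\pi/3, 2\pi/3, \pi/3, 2\pi/3$ --- exactly configuration $1.b$.

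Next I would handle the case where some vertex is unbalanced, so $S=\{\theta_1,\theta_2,\theta_1+\theta_2\}$ with $\theta_1<\theta_2$. A short check of the requirement $v,2v\in S$ at a balanced vertex rules out every possibility except $v=\theta_1$ with $\theta_2=2\theta_1$, so $S=\{\theta_1,2\theta_1,3\theta_1\}$. If $k$ denotes the number of unbalanced vertices, summing the four interior angles gives $(8+k)\theta_1=2\pi$. The case $k=4$ forces every interior angle to equal $\theta_3=\pi/2$, so $ABCD$ is a rectangle, configuration $1.a$. For $k=1$ the subtriangle on the three balanced vertices has angle sum $8\pi/9\neq \pi$, and for $k=3$ the subtriangle on the three unbalanced vertices requires two half-angles from $\{2\pi/11, 4\pi/11\}$ to sum to $5\pi/11$; both are immediate contradictions.

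This leaves $k=2$, where $\theta_1=\pi/5$, and the main obstacle is that angle sums alone cannot decide whether the two balanced vertices are adjacent or opposite in cyclic order. I would resolve this by applying the law of sines in each of the four subtriangles: the opposite-balanced arrangement compounds its side-length constraints into $\cos(\pi/5)=1/2$, a contradiction, while the adjacent-balanced arrangement yields $AB=BC=CD=1$ and $AC=BD=AD=\phi$ (the golden ratio). These are precisely the side and diagonal lengths of four consecutive vertices of a regular pentagon, so the configuration is $1.c$. A direct verification that each of $1.a, 1.b, 1.c$ actually admits at most three angles then completes the classification.
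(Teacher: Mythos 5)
Your proposal is correct, but it takes a genuinely different route from the paper's. The paper first sets aside the rectangle, picks an obtuse angle $\gamma$ and an acute angle $\beta$, forces any diagonal splitting $\beta$ to bisect it (so the angle set is exactly $\{\beta/2,\beta,\gamma\}$), and then enumerates the four cyclic patterns $\gamma\beta\gamma\beta$, $\gamma\gamma\beta\beta$, $\gamma\gamma\gamma\beta$, $\beta\beta\beta\gamma$ of interior angles, resolving each synthetically: parallelogram-then-rhombus reasoning gives $1.b$, a transversal with alternate interior angles gives the trapezoid $1.c$, and angle sums kill the last two. You instead classify vertices locally as balanced or unbalanced under the diagonal split, use an unbalanced vertex to pin $S=\{\theta_1,\theta_2,\theta_1+\theta_2\}$ and a balanced one to force $\theta_2=2\theta_1$, and then run the count $(8+k)\theta_1=2\pi$ over the number $k$ of unbalanced vertices, with $k=0$ yielding the square and $1.b$, $k=4$ the rectangle, $k\in\{1,3\}$ dying by subtriangle angle sums, and $k=2$ splitting by adjacency, resolved by the law of sines. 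Each approach buys something: your bookkeeping is more systematic in that it makes explicit that the configuration's angle set is precisely the vertex triples $\{v_1,v_2,v_1+v_2\}$ --- a point the paper leaves implicit when it asserts every interior angle is $\beta$ or $\gamma$ --- and it recovers the rectangle organically rather than excluding it at the outset; the price is trigonometric computation where the paper's $\gamma\gamma\beta\beta$ argument is purely synthetic. Two small points. First, an ordering nit: you derive $\theta_2=2\theta_1$ from the existence of a balanced vertex \emph{before} allowing $k=4$, where there is none; as you evidently intend, that case should be run directly from $4(\theta_1+\theta_2)=2\pi$, which makes every interior angle $\pi/2$ and yields an \emph{arbitrary} rectangle, not merely the special one with $\theta_1=\pi/6$. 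Second, in the opposite-balanced $k=2$ case a synthetic shortcut is available in place of the law of sines: the two subtriangles over the diagonal through the balanced vertices are isosceles with apexes at the unbalanced vertices, so the quadrilateral is a kite whose axis of symmetry is the other diagonal, which would then bisect the angles at the unbalanced vertices --- an immediate contradiction, matching your $\cos(\pi/5)=1/2$ computation.
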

\begin{proof}
Assume we are not in case $1.a$, so that the angles of the quadrilateral are not all $\pi/2$. In particular, there is at least one obtuse angle, $\gamma$, and one acute angle, $\beta$. Any angle $\alpha$ formed by splitting $\beta$ is less than $\beta$ and thus must be exactly $\beta/2$ so as not to create two additional angles for a total of four. These three angles $\alpha=\beta/2,\beta,\gamma$ are then exactly the three angles in the configuration. Now we consider each of the four cases of placing $\beta$ and $\gamma$ about the quadrilateral, with the first listed angle corresponding to vertex $A$, second to $B$, and so on and with $A,B, C,$ and $D$ in clockwise cyclic order.
\begin{description}
\item[Case $\gamma\beta\gamma\beta$]
Equal opposite angles implies the quadrilateral is a parallelogram. The fact that $BD$ bisects the two $\beta$ angles implies that $ABCD$ is in fact a rhombus. Thus, $AC$ also bisects the $\gamma$ angles, implying that $\gamma/2 = \beta$. So, $6\beta = 2\pi$ and $\alpha = \pi/6, \beta = \pi/3,$ and $\gamma = 2\pi/3$ in this case. Given that is a rhombus, the configuration in this case is similar to $1.b$.
\item[Case $\gamma\gamma\beta\beta$] In this case, we have $\gamma + \beta = \pi$ from the angle sum of the quadrilateral. This implies that $AB$ and $CD$ are parallel. So, by analyzing the alternate interior angles given by the transversal $AC$, we have $\gamma = \alpha + (\gamma - \alpha)$, where $\alpha = \angle CAB$ and $\gamma - \alpha = \angle CAD$. Thus, $\gamma - \alpha = \beta$ and  $3\beta/2 = \gamma$, so $\alpha = \pi/5, \beta = 2\pi/5,$ and $\gamma = 3\pi/5$. Then by considering isosceles triangles $DAB$ and $ABC$, we see that segments $\overline{DA}, \overline{AB},$ and $\overline{BC}$ are all of equal length. Thus, the configuration is similar to $1.c$ in this case.

\item[Case $\gamma\gamma\gamma\beta$]
Diagonal $BD$ bisects the angle $\beta$. But then, from $\triangle BCD$ and from $ABCD$ we have 
\begin{align}
    \gamma + 2 \alpha = \gamma + \beta &= \pi \\
    3\gamma + \beta &= 2\pi.
\end{align}
This implies $\gamma = \pi/2$, then contradicting our assumption that we are not in case $1.a$.

\item[Case $\beta\beta\beta\gamma$]
Diagonal $BD$ bisects the angle $\beta$ at $B$ implying that it also bisects the angle $\gamma$ at $D$ by analyzing the two resultant triangles. Then, $\gamma/2 = \beta$ and thus, from $3\beta + \gamma = 2\pi$, we have $\alpha = \pi/5, \beta = 2\pi/5,$ and $\gamma = 4\pi/5$. But then, since $AC$ must bisect the angles of $\beta$ at $A$ and $C$, the angle sum of triangle $ABC$ is $2\alpha + \beta \neq \pi$, a contradiction.
\end{description}
\end{proof}
\begin{lemma} \label{lem: convex hull triangle}
Let $A, B, C, D,$ and $E$ be five points such that their convex hull is  $\triangle ABC$ and no four of them form a convex quadrilateral. Then, there are three possible classes of configurations:
\begin{enumerate}
    \item[2.a)] Points $D$ and $E$ are both on the same edge of $\triangle ABC$.
    \item[2.b)] One of $D$ and $E$ are on an edge of $\triangle ABC$ with the other on the segment joining the point on the edge to the opposite vertex.
    \item[2.c)] Both $D$ and $E$ are in the interior of $\triangle ABC$ and collinear with one of vertices $A$, $B$, or $C$.
\end{enumerate}

\end{lemma}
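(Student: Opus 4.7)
My plan is to perform a case analysis on the positions of $D$ and $E$ relative to $\triangle ABC$. Since the convex hull of the five points is $\triangle ABC$ and $D, E$ are distinct from $A, B, C$, each of $D, E$ lies either in $\text{int}(\triangle ABC)$ or strictly on a single edge. This partitions into three cases: (I) both on edges, (II) exactly one on an edge, and (III) both interior. In each subcase falling outside the three target configurations, I will exhibit a convex quadrilateral among four of the five points, contradicting the hypothesis.

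If $D$ and $E$ lie on distinct open edges, say $D \in \text{int}(\overline{AB})$ and $E \in \text{int}(\overline{BC})$, then the cyclic ordering $A \to D \to E \to C \to A$ forms a convex quadrilateral, obtained from $\triangle ABC$ by cutting the corner at $B$ with the chord $\overline{DE}$. Hence $D$ and $E$ must share an edge (case 2.a). Next, suppose $D \in \text{int}(\overline{BC})$ and $E \in \text{int}(\triangle ABC)$. The cevian $\overline{AD}$ splits $\triangle ABC$ into sub-triangles $\triangle ABD$ and $\triangle ACD$ sharing only the side $\overline{AD}$. If $E \notin \overline{AD}$, then WLOG $E \in \text{int}(\triangle ABD)$, placing $E$ and $C$ on opposite sides of line $AD$; segment $\overline{EC}$ then crosses segment $\overline{AD}$ at an interior point of both (using that any segment whose endpoints lie on opposite sides of a line must cross that line, combined with the fact that line $AD \cap \triangle ABC = \overline{AD}$). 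Therefore $\{A, C, D, E\}$ is a convex quadrilateral with diagonals $\overline{AD}$ and $\overline{EC}$. Hence $E \in \overline{AD}$, giving case 2.b.

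Now suppose $D, E \in \text{int}(\triangle ABC)$, and let $\ell$ be the line through them. Since $\ell$ meets $\partial \triangle ABC$ at two points, either $\ell$ passes through a vertex (case 2.c) or $\ell$ strictly separates one vertex from the other two. In the latter case, WLOG $A$ is alone on one side of $\ell$ and $B, C$ are on the other. I claim $\{B, C, D, E\}$ is a convex quadrilateral. No three of these points are collinear: $\ell$ avoids the vertices of $\triangle ABC$, and line $BC$ avoids the interior of $\triangle ABC$. Moreover, each of the four points is extreme: since $B$ is an extreme point of $\triangle ABC$ and $C, D, E \in \triangle ABC \setminus \{B\}$, we have $B \notin \text{conv}\{C, D, E\}$, and symmetrically $C \notin \text{conv}\{B, D, E\}$; and since $B, C$ lie strictly on one side of $\ell$, the triangle $\triangle BCE$ meets $\ell$ only at $E$, so $D \in \ell \setminus \{E\}$ cannot lie in $\triangle BCE$, and symmetrically $E \notin \triangle BCD$. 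This gives the forbidden convex quadrilateral, forcing $\ell$ to pass through a vertex and yielding case 2.c.

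The main obstacle is verifying that each purported convex quadrilateral is genuinely in \emph{strict} convex position (no three collinear and no point inside the triangle of the others). The verifications throughout rely on two recurring ideas: a proper interior crossing of two segments forces their four endpoints into convex position, and a vertex of $\triangle ABC$ is an extreme point of the triangle, hence cannot lie in the convex hull of any other points drawn from $\triangle ABC$.
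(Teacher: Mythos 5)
Your proof is correct and takes essentially the same approach as the paper: the same case split on how many of $D$ and $E$ lie in the interior of $\triangle ABC$, with a forbidden convex quadrilateral exhibited in every configuration outside 2.a--2.c. The only minor deviation is in the both-interior case, where you use the line through $D$ and $E$ separating one vertex from the other two instead of the paper's subdivision of $\triangle ABC$ by the segments $\overline{DA}$, $\overline{DB}$, $\overline{DC}$; your verifications of strict convex position are in fact more careful than the paper's brief assertions.
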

\begin{proof}
We proceed by casework on the number of $D$  and $E$ in the interior of $\triangle ABC$.
\begin{description}
\item[No points in the interior of $\triangle ABC$]
If neither $D$ nor $E$ are in the interior of $\triangle ABC$ then, since the convex hull of the five points is $\triangle ABC$, $D$ and $E$ must both be on the edges of $\triangle ABC$. If they are not on the same side of triangle, then the quadrilateral formed by $D$, $E$ and the ends of the edge which neither $D$ nor $E$ lie on is convex, yielding a contradiction. So, in this case the points are configuration $2.a)$.
\item[One point in the interior of $\triangle ABC$]
Suppose without loss of generality that $D$ is the point along an edge of $\triangle ABC$, say $\overline{AB}$. Then, $E$ is in the interior of $\triangle ABC$. Now $E$ must be on $\overline{CD}$ or else one of $ADEC$ or $BCED$ are a convex quadrilateral. Therefore, the points in this case are in configuration $2.b)$
\item[Both points in the interior of $\triangle ABC$]
Consider the three segments from $D$ to $A, B, $ and $C$. If $E$ is in the interior of $\triangle ADC$ then either $AEDB$ or $CEDB$ is a convex quadrilateral. This is similarly true if $E$ is in the interior of $\triangle ABD$ or $\triangle BCD$, so $E$ must be collinear with $D$ and one of $A, B,$ or $C$. Thus,  the points in this case are in configuration $2.c).$
\end{description}
\end{proof}

To deal with the configurations from Lemma \ref{lem: convex hull triangle}, we will need an additional lemma.

\begin{lemma} \label{lem: no pts in triangle}
Let $A,B,C, D$ be points such that $D$ is contained in the interior of $\triangle ABC$ and configuration induces at most three distinct angles. Then, $\triangle ABC$ must be equilateral and $D$ must be in the center of $\triangle ABC$. 
\end{lemma}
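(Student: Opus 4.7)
The plan is to let $\theta_1<\theta_2<\theta_3$ denote the (at most three) distinct angles of the configuration and extract strong arithmetic constraints using the interior position of $D$.

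First I would observe that because $D$ is strictly interior to $\triangle ABC$, at each vertex the full vertex angle decomposes as a sum of two strictly positive sub-angles, both of which must lie in $\{\theta_1,\theta_2,\theta_3\}$. Hence each of $\alpha,\beta,\gamma$ is strictly greater than $\theta_1$, so $\alpha,\beta,\gamma\in\{\theta_2,\theta_3\}$. Combined with the sum identities $\angle ADB+\angle BDC+\angle CDA=2\pi$ and $\alpha+\beta+\gamma=\pi$, this forces the largest $D$-angle (and hence $\theta_3$) to be at least $2\pi/3$, and since not all triangle angles can equal $\theta_3\geq 2\pi/3$, at least one triangle angle must equal $\theta_2$, so $\theta_2\leq \pi/3$. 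A short separate check rules out the degenerate possibilities that fewer than three distinct angles appear.

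Using $\theta_2\leq\pi/3<2\pi/3\leq\theta_3$, I would enumerate the possible multisets. For the $D$-angles summing to $2\pi$, at least two must equal $\theta_3$ (otherwise the sum is too small), giving either $\{\theta_3,\theta_3,\theta_3\}$ with $\theta_3=2\pi/3$, or $\{\theta_3,\theta_3,\theta_i\}$ with $\theta_3=\pi-\theta_i/2$ for some $i\in\{1,2\}$. For the triangle angles summing to $\pi$ with values in $\{\theta_2,\theta_3\}$, only $\{\theta_2,\theta_2,\theta_2\}$ (equilateral, $\theta_2=\pi/3$) or $\{\theta_2,\theta_2,\theta_3\}$ (isosceles, $\theta_3=\pi-2\theta_2$) are possible.

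Finally, for each of the (at most four) combinations I would try to solve the sub-angle decomposition $a_1+a_2=\alpha$ at each vertex with $a_i\in\{\theta_1,\theta_2,\theta_3\}$, using the strict inequalities $0<\theta_1<\theta_2<\theta_3$. Three combinations yield an immediate contradiction (in each case, no choice of a pair in the angle set respects both the sum equation at the obtuse vertex and the ordering of the $\theta_i$). The only surviving case is $\triangle ABC$ equilateral with $\theta_2=\pi/3$, every $D$-angle equal to $\theta_3=2\pi/3$, and every vertex sub-angle equal to $\theta_1=\pi/6$; the last condition places $D$ on all three angle bisectors, hence at the incenter, which for an equilateral triangle coincides with the center. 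The main obstacle will be the careful bookkeeping in this final case check, in particular using the strict inequalities $\theta_1<\theta_2<\theta_3$ and the positivity of the sub-angles to exclude spurious near-solutions such as $\theta_i=0$ or $\theta_i=\theta_j$.
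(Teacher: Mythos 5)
Your proof is correct, but it runs on a different engine than the paper's. The paper's argument hinges on the strict domination inequalities $\angle ADB > \angle ACB$ and $\angle ADB > \angle ACD$ (and their cyclic analogues), an exterior-angle consequence of $D$ being interior: with only three values available, these force all three angles at $D$ to equal the largest angle and all six vertex sub-angles to equal the smallest, which immediately completes the labeling and yields the equilateral triangle with $D$ at its center. You never invoke that domination; instead you combine the sum identities $\angle ADB + \angle BDC + \angle CDA = 2\pi$ and $\alpha+\beta+\gamma=\pi$ with the observation that each vertex angle, being a sum of two positive configuration angles, exceeds $\theta_1$, to extract $\theta_3 \geq 2\pi/3$ and $\theta_2 \leq \pi/3$, and then enumerate multisets. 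I verified the enumeration closes: the $D$-multiset $\{\theta_3,\theta_3,\theta_3\}$ with an isosceles triangle gives $\theta_2=\pi/6$, and the obtuse vertex angle $2\pi/3$ cannot be a sum of two angles from $\{\theta_1,\pi/6\}$; the multiset $\{\theta_3,\theta_3,\theta_i\}$ with an isosceles triangle gives $\theta_i=4\theta_2$, impossible for $i\in\{1,2\}$; and $\{\theta_3,\theta_3,\theta_i\}$ with an equilateral triangle fails not at an obtuse vertex (there is none) but because the forced sub-angles $\theta_1=\pi/6$ make all three $D$-angles equal to $2\pi/3$, contradicting $\theta_i<\theta_3$ --- so adjust your parenthetical description of that case. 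Two small bookkeeping notes: your ``at most four combinations'' is really six if $i\in\{1,2\}$ is split (four under your grouping), and the promised degenerate checks are indeed short (one distinct angle forces $\theta=2\theta$; two force every vertex angle to be $\theta_2=\pi/3$, so the angles at $D$ sum to at most $\pi<2\pi$). The trade-off between the approaches: the paper's inequality collapses the casework in two lines but leaves the ``completion of all angles'' implicit, whereas your sum-identity enumeration is longer yet fully self-contained and pins down the values $\pi/6$, $\pi/3$, $2\pi/3$ explicitly along the way.
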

\begin{proof}
Note that $\angle ADB > \angle ACB, \angle ACD$. This is similarly true of $\angle BDC, \angle BAC, \angle BAD$ and of $\angle ADC, \angle ABC, \angle ABD$. Symmetry and the maximum of three distinct angles then allows the completion of all angles in the configuration:

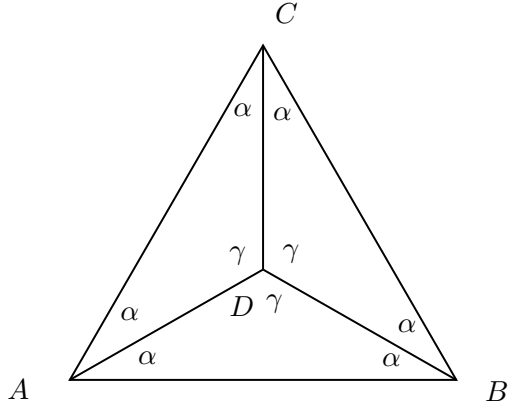
\begin{figure}[h!]
    \centering

\tikzset{every picture/.style={line width=0.75pt}} 

\begin{tikzpicture}[x=0.75pt,y=0.75pt,yscale=-1,xscale=1]

\draw   (323.46,69) -- (420.91,237.8) -- (226,237.8) -- cycle ;
\draw    (323.46,69) -- (323.46,182.1) ;
\draw    (323.46,182.1) -- (226,237.8) ;
\draw    (420.91,237.8) -- (323.46,182.1) ;

\draw (193,236) node [anchor=north west][inner sep=0.75pt]   [align=left] {$\displaystyle A$};
\draw (434,237) node [anchor=north west][inner sep=0.75pt]   [align=left] {$\displaystyle B$};
\draw (328,46) node [anchor=north west][inner sep=0.75pt]   [align=left] {$\displaystyle C$};
\draw (305,194) node [anchor=north west][inner sep=0.75pt]   [align=left] {$\displaystyle D$};
\draw (259,222) node [anchor=north west][inner sep=0.75pt]   [align=left] {$\displaystyle \alpha $};
\draw (250,200) node [anchor=north west][inner sep=0.75pt]   [align=left] {$\displaystyle \alpha $};
\draw (382,223) node [anchor=north west][inner sep=0.75pt]   [align=left] {$\displaystyle \alpha $};
\draw (390,206) node [anchor=north west][inner sep=0.75pt]   [align=left] {$\displaystyle \alpha $};
\draw (307,97) node [anchor=north west][inner sep=0.75pt]   [align=left] {$\displaystyle \alpha $};
\draw (327,99) node [anchor=north west][inner sep=0.75pt]   [align=left] {$\displaystyle \alpha $};
\draw (305,169) node [anchor=north west][inner sep=0.75pt]   [align=left] {$\displaystyle \gamma $};
\draw (332,168) node [anchor=north west][inner sep=0.75pt]   [align=left] {$\displaystyle \gamma $};
\draw (323.46,193.1) node [anchor=north west][inner sep=0.75pt]   [align=left] {$\displaystyle \gamma $};

\end{tikzpicture}

    \caption{Resultant Triangular Configuration}
    \label{fig:my_label}
\end{figure}

Thus, $\triangle ABC$ is equilateral and $D$ is in the center, as desired.

\end{proof}

Now we exhaustively check all possible configurations given by Lemmas \ref{lem: convex quad 3 angles} and \ref{lem: convex hull triangle}. 

\begin{description}
\item[1.a)] Consider adding a point to configuration $1.a$, with angles $\alpha \leq \beta < \gamma$ and $\alpha + \beta = \gamma = \pi/2$.  Then, if a point $E$ is added in the exterior of $ABCD$, it will form an obtuse angle with one edge of the angle being a side of the rectangle. For example, if $E$ is added below $\overline{CD}$, then $\angle BCE$ is obtuse. If $E$ is added to edge $AB$, then $\angle DEB$ is obtuse. It will similarly induce an obtuse angle if it is added to any other edge. Finally, if $E$ is added to the interior of $ABCD$, then the only way $E$ may be added without inducing an obtuse angle is if all the segments from $E$ to the vertices of the rectangle form angles of $\pi/2$ with each other at $E$. However, this would imply that the diagonals of $ABCD$ intersect at $E$ at a right angle, implying that $ABCD$ is a square. 

So, the only valid configurations require that $ABCD$ form a square. Moreover, if $ABCD$ form a square, we can still not induce any obtuse angles. This is because the other two angles in any triangle with an obtuse angle could not both be $\pi/4$ (and cannot be $\pi/2$), yielding more than three distinct angles. Thus, the maximal configuration in this case is adding a fifth point $E$ as the centerpoint of a square.
\item[1.b)]
In configuration $1.b)$, the angles are all determined: $\alpha = \pi/6, \beta = \pi/3$, and $\gamma = 2\pi/3$. Let the points $A$, $B$, $C$, and $D$ be in clockwise order around the configuration such that $\overline{AC}$ is the segment dividing the two equilateral triangles. 
In order to not contradict Lemma \ref{lem: convex quad 3 angles}, any added point must be in the interior of the rhombus (no point may be added to decrease the number of vertices in the convex hull since $ABCD$ is a parallelogram). In order for $E$ to not yield any angles smaller than $\alpha$, $E$ must be in the center of $\triangle ABC$ or $\triangle CDA$. However, in either case, this yields a new angle of $\pi/2$. So, no points may be added in this case.
\item[1.c)]
As in $1.b)$, the angles in $1.c)$ are all determined with $\alpha = \pi/5, \beta = 2\pi/5,$ and $\gamma = 3\pi/5$. Label the points $ABCD$ counterclockwise starting from the top left as in the diagram of $1.c$ in Lemma \ref{lem: convex quad 3 angles}. In order to not violate Lemma \ref{lem: convex quad 3 angles}, any added point must be in the interior of $ABCD$, must result in a triangular convex hull, or must be outside of $ABCD$ and have every convex quadrilateral in the configuration an instance of $1.c)$. In the former case, in order to not add an angle smaller than $\alpha$, $E$ must be added at the intersection of $\overline{AC}$ and $\overline{BE}$. In the second case, $E$ must be added at the intersection of $\dvec{AD}$ and $\dvec {BC}$. In the last case, the configuration with the added point cannot have a convex hull of a quadrilateral, as that quadrilateral could not be an instance of $1.c)$. Thus, it must be a pentagon. In order to guarantee that every convex quadrilateral in the configuration is a copy of $1.c)$, it must be regular. All three configurations are valid, but are not mutually compatible as adding multiple of these points would form an angle of magnitude less than $\alpha$.

\item[2.a)]
Three distinct angles are immediately induced in this case. Namely, $\angle ACD = \alpha < \angle ACE = \beta < \angle ACB = \gamma$. 
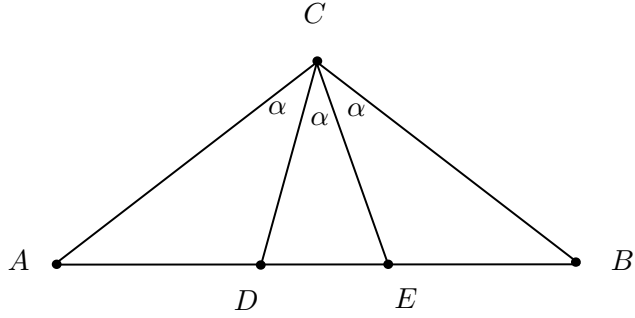
\begin{figure}[h!]
    \centering

\tikzset{every picture/.style={line width=0.75pt}} 

\begin{tikzpicture}[x=0.75pt,y=0.75pt,yscale=-1,xscale=1]

\draw    (183.88,183.61) -- (449,183.46) ;
\draw    (316.39,81.42) -- (449,183.46) ;
\draw    (183.88,183.61) -- (316.39,81.42) ;
\draw    (316.55,82.74) -- (352.47,184.02) ;
\draw    (316.39,81.42) -- (288.16,183.81) ;
\draw  [fill={rgb, 255:red, 0; green, 0; blue, 0 }  ,fill opacity=1 ] (448.93,182.34) .. controls (448.93,181.3) and (448.04,180.47) .. (446.95,180.47) .. controls (445.86,180.47) and (444.98,181.3) .. (444.98,182.34) .. controls (444.98,183.37) and (445.86,184.21) .. (446.95,184.21) .. controls (448.04,184.21) and (448.93,183.37) .. (448.93,182.34) -- cycle ;
\draw  [fill={rgb, 255:red, 0; green, 0; blue, 0 }  ,fill opacity=1 ] (187.07,183.45) .. controls (187.07,182.42) and (186.19,181.58) .. (185.1,181.58) .. controls (184.01,181.58) and (183.12,182.42) .. (183.12,183.45) .. controls (183.12,184.49) and (184.01,185.32) .. (185.1,185.32) .. controls (186.19,185.32) and (187.07,184.49) .. (187.07,183.45) -- cycle ;
\draw  [fill={rgb, 255:red, 0; green, 0; blue, 0 }  ,fill opacity=1 ] (290,183.81) .. controls (290,182.78) and (289.12,181.94) .. (288.03,181.94) .. controls (286.94,181.94) and (286.05,182.78) .. (286.05,183.81) .. controls (286.05,184.85) and (286.94,185.68) .. (288.03,185.68) .. controls (289.12,185.68) and (290,184.85) .. (290,183.81) -- cycle ;
\draw  [fill={rgb, 255:red, 0; green, 0; blue, 0 }  ,fill opacity=1 ] (354.28,183.45) .. controls (354.28,182.42) and (353.4,181.58) .. (352.31,181.58) .. controls (351.22,181.58) and (350.33,182.42) .. (350.33,183.45) .. controls (350.33,184.49) and (351.22,185.32) .. (352.31,185.32) .. controls (353.4,185.32) and (354.28,184.49) .. (354.28,183.45) -- cycle ;
\draw  [fill={rgb, 255:red, 0; green, 0; blue, 0 }  ,fill opacity=1 ] (318.53,80.87) .. controls (318.53,79.84) and (317.64,79) .. (316.55,79) .. controls (315.46,79) and (314.58,79.84) .. (314.58,80.87) .. controls (314.58,81.9) and (315.46,82.74) .. (316.55,82.74) .. controls (317.64,82.74) and (318.53,81.9) .. (318.53,80.87) -- cycle ;

\draw (290.29,99.8) node [anchor=north west][inner sep=0.75pt]   [align=left] {$\displaystyle \alpha $};
\draw (311.56,105.15) node [anchor=north west][inner sep=0.75pt]   [align=left] {$\displaystyle \alpha $};
\draw (330.17,100.92) node [anchor=north west][inner sep=0.75pt]   [align=left] {$\displaystyle \alpha $};
\draw (159,175) node [anchor=north west][inner sep=0.75pt]   [align=left] {$\displaystyle A$};
\draw (463,175) node [anchor=north west][inner sep=0.75pt]   [align=left] {$\displaystyle B$};
\draw (308,50) node [anchor=north west][inner sep=0.75pt]   [align=left] {$\displaystyle C$};
\draw (273,195) node [anchor=north west][inner sep=0.75pt]   [align=left] {$\displaystyle D$};
\draw (354,194) node [anchor=north west][inner sep=0.75pt]   [align=left] {$\displaystyle E$};

\end{tikzpicture}

    \caption{Case 2.a)}
    \label{fig:my_label}
\end{figure}

Since the difference between each pair of angles is also induced by this configuration, we have that $\beta = 2\alpha$ and $\gamma = 3\alpha$. Since $\angle ADC > \angle AEC > \angle ABC$, we have $\angle ADC = \gamma, \angle AEC = \beta, $ and $\angle ABC = \alpha$. This is similarly true of $\angle CEB, \angle CDB,$ and $\angle CAB$ by symmetry. Thus, the angle sum of $\triangle ACB$ implies $5 \alpha = \pi$ and thus $\alpha \pi/5, \beta = 2\pi/5, $ and $\gamma = 3\pi/5$. Thus this configuration is completely determined. 

Now, if another point were added, either the convex hull would remain a triangle or there would be four points who form a convex quadrilateral. In the former case, no point could be in the interior of a triangle, as that would force the angles to be as in Lemma \ref{lem: no pts in triangle}, which they are not. Thus, an additional added point would have to be placed on an existing edge. It could not be placed on $\overline{AB}$, as it would split an angle of $\alpha$. If it were placed on $\overline{AC}$ or $\overline{BC}$ it would form a convex quadrilateral. Given the induced values of the angles in this case, that quadrilateral would have to be similar to configuration $1.c)$. However, from the prior casework, no configuration containing a similar copy of $1.c)$ may have more than five points.
\item[2.b)] 
In this case, there is a vertex contained in a triangle. From Lemma \ref{lem: no pts in triangle}, this forces the triangle to be equilateral and the point to be in the center of the triangle. However, this induces angles of $\pi/6, \pi/3, 2\pi/3$. Moreover, $D$ and $E$ form a right angle, yielding more than three angles.
\item[2.c)]
In this case,  Lemma \ref{lem: no pts in triangle} requires that both interior points be in the center of $\triangle ABC$ simultaneously, a contradiction.
\end{description}

Therefore, $P(3) = P(2) = 5$, with five optimal configurations as in Figure \ref{fig: 3.thm}.

\begin{cor}\label{cor: p3-with-0}
    One might also wish to include the trivial 0-angle in our count.  In this case, $P(3)=5$, but the square with the center-point and the pentagon are now the only valid configurations.
\end{cor}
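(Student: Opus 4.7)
The plan is to revisit each of the five optimal configurations identified in Theorem \ref{thm: P(3) = 5} and determine which remain admissible once the $0$-angle is counted. A $0$-angle (or $\pi$-angle) is contributed precisely when three of the chosen points are collinear, so the problem reduces to a collinearity check on each of the five configurations. Since strengthening the constraint can only shrink the class of admissible sets, we automatically have $P(3) \le 5$, and it suffices to identify which five-point configurations survive.

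First I would treat the square with its center, configuration $A$ in Figure \ref{fig: 3.thm}. By Theorem \ref{thm: P(2) = 5} this configuration induces only the two angles $\pi/4$ and $\pi/2$, and its two diagonals each contain a collinear triple (two opposite vertices together with the center). Including the $0$-angle thus raises the total to three distinct angles, so this configuration remains admissible. Next, the regular pentagon already uses its full allowance of three angles $\pi/5, 2\pi/5, 3\pi/5$, but no three of its vertices are collinear, so no $0$-angle is introduced and it remains admissible as well.

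For each of the three remaining configurations I would exhibit an explicit collinear triple, forcing the angle count up to four. The two sub-configurations from case 1.c) where a fifth point is attached to four vertices $A,B,C,D$ of a regular pentagon both produce such triples by construction: the interior intersection point $E$ lies on both diagonals $\overline{AC}$ and $\overline{BD}$, so $\{A,E,C\}$ and $\{B,E,D\}$ are each collinear; the exterior pentagram apex lies on both $\dvec{AD}$ and $\dvec{BC}$, so $\{A,D,E\}$ and $\{B,C,E\}$ are each collinear. The five-point configuration arising from case 2.a) has its two added points on a single edge $\overline{AB}$ of the outer triangle, producing four collinear points on that line. In every case the $0$-angle becomes a fourth distinct angle, so the configuration is no longer admissible.

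The only (mild) obstacle is to be sure that no additional optimal configurations can appear under the stricter counting convention, but this is automatic: any five-point configuration admissible here was already admissible in Theorem \ref{thm: P(3) = 5}, so the list of candidates is the one already classified, and the collinearity test applied above is exhaustive. Combining the two positive cases with the upper bound $P(3) \le 5$ yields $P(3) = 5$, attained only by the square-with-center and the regular pentagon.
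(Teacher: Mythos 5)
Your proposal is correct and follows essentially the same route as the paper: restrict attention to the five optimal configurations from Theorem \ref{thm: P(3) = 5} (since adding the $0$-angle only strengthens the constraint) and check each for collinear triples, keeping only the square-with-center and the regular pentagon. You simply carry out explicitly the ``direct inspection'' that the paper leaves implicit, exhibiting the collinear triples in the three rejected configurations.
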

\begin{proof}
    The set of valid five-point configurations when we count the 0-angle must be a subset of the valid five-point configurations we identified above.  By direct inspection, the square with the center-point and the pentagon are the only of the five in Figure \ref{fig: 3.thm} which define only three angles. All the others define three angles greater than zero and also the 0-angle by collinearity.
\end{proof}

\section{Future Work}
While it seems possible to compute $P(k)$ by exhaustive casework for higher values of $k$, the casework quickly becomes overwhelming. Additionally, while it is potentially possible to repeat such methods in higher dimensions, the visualization of the proofs played a crucial role in this analysis. In combination with the added degrees of freedom from adding dimensions, this would make this method of computation quickly intolerable.

Future work may tighten our upper bound on $P(k)$. However, we make the following conjecture.
\begin{conj}
    The lower bound on $P(k)$ in Theorem \ref{thm: linear bounds on P(k)} is tight.  Namely, $P(2k)=2k+3$ and $P(2k+1)=2k+3$ for all $k\geq1$.
\end{conj}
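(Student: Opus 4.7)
The plan is to prove the conjecture by induction on $k$, with the base case $k=1$ supplied by Theorems \ref{thm: P(2) = 5} and \ref{thm: P(3) = 5}. The lower bounds $P(2k) \geq 2k+3$ and $P(2k+1) \geq 2k+3$ from Theorem \ref{thm: linear bounds on P(k)} are realized by the vertices of a regular $(2k+2)$-gon together with its center: by the inscribed-angle theorem the vertex-only triples realize exactly the $2k$ angles $a\pi/(2k+2)$ for $a=1,\dots,2k$, while each triple involving the center yields an isoceles triangle whose apex angle is an even multiple of $\pi/(2k+2)$ and whose base angles are $(k+1-|j-i|)\pi/(2k+2)$, so both fall within the same set of $2k$ values. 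Since $P$ is monotone in $k$, it suffices to prove the upper bound $P(2k+1) \leq 2k+3$: every non-collinear set $\mathcal{P}$ with $|\mathcal{P}| = 2k+4$ satisfies $A(\mathcal{P}) \geq 2k+2$.

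For the inductive step, suppose for contradiction that such a $\mathcal{P}$ exists with $A(\mathcal{P}) \leq 2k+1$. For each $p \in \mathcal{P}$, the set $\mathcal{P} \setminus \{p\}$ has $2k+3$ points, and by the inductive hypothesis $P(2k-1)=2k+1$ we must have $A(\mathcal{P} \setminus \{p\}) \geq 2k$; otherwise $|\mathcal{P}\setminus\{p\}| = 2k+3 > P(2k-1) = 2k+1$. Hence deleting any single point erases at most one angle, so every angle class is witnessed by many disjoint triples and no point is the sole ``carrier'' of two distinct angles. I would then attempt to classify the $(2k+3)$-point configurations with $A \leq 2k$; the hope is that, up to similarity, only the regular $(2k+2)$-gon with center (together with perhaps finitely many sporadic relatives for small $k$, analogous to the five configurations in Figure \ref{fig: 3.thm}) survives. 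A region-partitioning analysis in the style of Section \ref{sec: P(2)} would then show that no additional point can be appended to any such configuration without creating at least two new angles, producing the desired contradiction.

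The principal obstacle is the classification step. Even for $k=1$, the proof of Theorem \ref{thm: P(3) = 5} required the delicate convex-quadrilateral and triangular-hull casework of Lemmas \ref{lem: convex quad 3 angles}, \ref{lem: convex hull triangle}, and \ref{lem: no pts in triangle}; the number of ``combinatorial types'' of near-extremal configurations appears to proliferate with $k$, so a direct generalization of exhaustive casework seems prohibitive. A more promising route is a symmetry-extraction argument: use the deletion rigidity established above to force a nontrivial cyclic or dihedral symmetry group acting on $\mathcal{P}$, then appeal to representation-theoretic constraints on the angle orbits to conclude that $\mathcal{P}$ must be (up to similarity) the regular $(2k+2)$-gon with center. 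A secondary avenue, if such a structural theorem is too optimistic, is an incidence-theoretic double count: bound the number of triples realizing each of the $\leq 2k+1$ angle classes and compare the sum against $\binom{2k+4}{3}$, aiming to show that so few classes cannot accommodate every triple over $2k+4$ non-collinear points. Either route promises to sharpen the constant in the current upper bound $P(k) \leq 6k$ toward the conjectured value $2\lceil k/2 \rceil + 3$.
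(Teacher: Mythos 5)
The statement you are trying to prove is a \emph{conjecture} that the paper deliberately leaves open (it appears in the Future Work section, where the authors say progress even on the Weak Dirac Conjecture ``would still fall short''), so there is no proof in the paper to compare against; the question is whether your proposal closes the problem, and it does not. Your lower-bound half is correct and worth stating: the regular $(2k+2)$-gon with its center does realize $2k+3$ points determining exactly the $2k$ angles $a\pi/(2k+2)$, $a=1,\dots,2k$ (the evenness of $2k+2$ is what makes the base angles $(k+1-d)\pi/(2k+2)$ land in the same set), recovering the bound the paper imports from \cite{GenPaper}, and the reduction via monotonicity of $P$ to the single inequality $P(2k+1)\le 2k+3$ is sound. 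But the entire content of the conjecture is that upper bound, and there your argument has no substance. The deletion step yields only that removing any one point of a hypothetical $(2k+4)$-point set with at most $2k+1$ angles destroys at most one angle class; your inference that ``every angle class is witnessed by many disjoint triples'' is a non sequitur (at most one class can have all of its witnesses through a given point, which says nothing about the triple structure of the other classes), and no mechanism is offered by which this weak rigidity forces a cyclic or dihedral symmetry. The paper's own data points against the symmetry-extraction hope even at $k=1$: of the five optimal $P(3)$ configurations in Figure \ref{fig: 3.thm}, four (the pentagon minus a vertex plus a diagonal intersection, the completed apex configuration, etc.) carry only a single reflection symmetry rather than the full $D_5$ of the pentagon, so near-extremal sets proliferate in combinatorial type rather than rigidify. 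The classification of $(2k+3)$-point sets with at most $2k$ angles --- the step you yourself flag as the principal obstacle --- is thus precisely the open problem restated, not a reduction of it.

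Your fallback double count provably cannot succeed at this scale, so it is not a viable secondary avenue. The total number of angle instances in an $n$-point set is at most $3\binom{n}{3}$, and with $n=2k+4$ and $2k+1$ classes the average class needs to absorb only about $n^2/2$ instances; since by Pach and Sharir \cite{PaSha} a single angle can repeat $\Theta(n^2\log n)$ times among $n$ points, there is ample room and no contradiction can emerge from counting alone --- some structural input is unavoidable. Two smaller gaps: the induction needs $\mathcal{P}\setminus\{p\}$ to be non-collinear (the degenerate case of $2k+3$ collinear points plus one off the line must be handled separately, though it is easy since such sets determine many distinct angles), and the base of your induction as written requires $P(3)=5$ from Theorem \ref{thm: P(3) = 5}, which is fine, but nothing in the proposal supplies the inductive step beyond it. In sum: you have correctly verified the known lower-bound construction, but the conjecture remains exactly as open as the paper leaves it.
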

Therefore, we believe that future work should improve the upper bound of $P(n)\leq 6n$, either via progress towards the Weak Dirac Conjecture (which would still fall short of our conjecture) or by some other means.  Alternatively, future research may find a more efficient method of constructing viable point sets without the need for the exhaustive search we perform.

It is also an open problem to investigate $P(k)$ with point sets in more than two dimensions. Low angle configurations using variations of Lenz's construction, as in \cite{GenPaper}, may yield insight into optimal structures in higher dimensions.





 \appendix

\end{document}